\newtheorem{theorem}{Theorem}[section]
\newtheorem{lemma}{Lemma}[section]
\newtheorem{corollary}{Corollary}[section]
\newtheorem{proposition}{Proposition}[section]
\numberwithin{equation}{section}
\begin{document}
\title{New inequalities for  the generalized Karcher mean}
\author{Mohammed Sababheh, Hamid Reza Moradi and Zahra Heydarbeygi}
\subjclass[2010]{Primary 47A63. Secondary 47A64.}
\keywords{Karcher equation, Karcher mean, operator monotone function, positive definite matrix.} \maketitle

\pagestyle{myheadings}
\markboth{\centerline {New inequalities for  the generalized Karcher mean}}
{\centerline {M. Sababheh, H.R. Moradi \& Z. Heydarbeygi}}
\bigskip
\bigskip

\begin{abstract}
Recently, P\'{a}lfia introduced a generalized Karcher mean as a solution of an operator equation. In this article, we present several relations for this new mean.

In particular, we investigate the behavior of this generalized mean when filtered through operator monotone functions and positive linear maps; revealing its information monotonicity.
\end{abstract}

\section{Introduction}
Let $\mathcal{M}_k$ be the algebra of all complex $k\times k$ matrices. A matrix $A\in \mathcal{M}_k$ is said to be positive definite, and is written $A>0$, if $\left<Ax,x\right> >0$ for all nonzero vectors $x\in\mathbb{C}^k.$ If $A,B\in\mathcal{M}_k$ are Hermitian matrices such that $A-B$ is positive definite, we write $A>B.$ If $A-B$ is positive semidefinite, we write $A\geq B.$ The identity matrix in $\mathcal{M}_k$ will be denoted by $I_k$, or $I$ if no confusion arises. A linear map $\Phi:\mathcal{M}_{k_1}\to\mathcal{M}_{k_2}$ is said to be positive if $\Phi(A)\geq 0$ whenever $A\geq 0.$ If, in addition, $\Phi(I_{k_1})=I_{k_2},$ we say that $\Phi$ is a unital positive  map.

A norm $|||\cdot|||$ on ${{\mathcal{M}}_{k}}$  is said to be unitarily invariant if $|||UXV|||=|||X|||$  for all $X\in {{\mathcal{M}}_{k}}$  and all unitary matrices $U,V\in {{\mathcal{M}}_{k}}$. We denote by $\left\| A \right\|$ the spectral (operator) norm of $A$, i.e., $\left\| A \right\|=\max \left\{ \left\| Ax \right\|:\text{ }\left\| x \right\|=1,~x\in {{\mathbb{C}}^{k}} \right\}$.

If $f:[0,\infty)\to \mathbb{R}$ is a continuous function and $A\geq 0,$ we compute $f(A)$ by functional calculus. That is, if $A=U{\text{diag}}(\lambda_i)U^*$ is the spectral decomposition of $A$, $f(A)$ is defined via the formula $f(A)=U{\text{diag}}(f(\lambda_i))U^*$.

For a positive weight vector $w=\left( {{w}_{1}},\ldots ,{{w}_{n}} \right)$ (i.e., $w_i>0$ and $\sum_{i=1}^{n}w_i=1$) and positive definite matrices $\mathbb{A}=({{A}_{1}},\ldots ,{{A}_{n}})$, the Karcher mean $\Lambda(w;\mathbb{A})$ is the unique positive solution of
\begin{equation}\label{karcher}
\sum\limits_{i=1}^{n}{{{w}_{i}}\log \left( {{X}^{-\frac{1}{2}}}{{A}_{i}}{{X}^{-\frac{1}{2}}} \right)}=0.
\end{equation}
We call \eqref{karcher} the Karcher equation (see \cite{moakher}).

In \cite{3}, Lim and P\'alfia  introduced the notion of matrix power mean of positive definite matrices of some
fixed dimension. The matrix power mean ${{P}_{t}}\left( w,\mathbb{A} \right)$ is defined by the unique positive definite
solution of the following non-linear equation:
\begin{equation}\label{4}
X=\sum\limits_{i=1}^{n}{{{w}_{i}}\left( X{{\sharp}_{t}}{{A}_{i}} \right)},\quad\text{ }t\in \left( 0,1 \right]
\end{equation}
where $A{{\sharp}_{t}}B={{A}^{{1}/{2}\;}}{{\left( {{A}^{{-1}/{2}\;}}B{{A}^{{-1}/{2}\;}} \right)}^{t}}{{A}^{{1}/{2}\;}}$ is the $t$-weighted geometric mean of $A$ and $B$. For $t\in \left[ -1,0 \right)$, it is defined by ${{P}_{t}}\left( w;\mathbb{A} \right)={{P}_{-t}}{{\left( w;{{\mathbb{A}}^{-1}} \right)}^{-1}}$, where ${{\mathbb{A}}^{-1}}=\left( A_{1}^{-1},\ldots ,A_{n}^{-1} \right)$. As $t\to 0$, the power mean $P_t$ coincides with the Karcher mean $\Lambda$.

P\'alfia \cite{palfia} generalized the operator equation \eqref{4} to the following form
\begin{equation}\label{5}
\sum\limits_{i=1}^{n}{{{w}_{i}}\;g\left( {{X}^{-\frac{1}{2}}}{{A}_{i}}{{X}^{-\frac{1}{2}}} \right)}=0
\end{equation}
where $w$ is a weight vector and $g$ is an operator monotone function on $\left( 0,\infty  \right)$ with 
$g\left( 1 \right)=0$ and $g'\left( 1 \right)=1$.

Of course, the Karcher and the power means can be obtained by setting $g\left( x \right)=\log x$ and $g\left( x \right)=\frac{{{x}^{t}}-1}{t}$ in \eqref{5}, respectively. In what follows ${\sigma }_{g}\left( w;\mathbb{A} \right)$ denotes
the solution $X$ of \eqref{5}.

Let $\mathscr{M}$ denote the set of all operator monotone functions on $(0,\infty)$, and let
\[\mathscr{L}=\left\{ g\in \mathscr{M}|g\left( 1 \right)=0\text{ and }g'\left( 1 \right)=1 \right\}.\]
In \cite[Proposition 6.15]{palfia}, the author showed the following order among ${{\sigma }_{g}}\left( w;\mathbb{A} \right)$, weighted harmonic and arithmetic means 
\begin{equation}\label{6}
{{\left( \sum\limits_{i=1}^{n}{{{w}_{i}}A_{i}^{-1}} \right)}^{-1}}\le {{\sigma }_{g}}\left( w;\mathbb{A} \right)\le \sum\limits_{i=1}^{n}{{{w}_{i}}{{A}_{i}}}.
\end{equation}
The proof of \eqref{6} is based on the observation that when $g\in \mathscr{L}$, it follows that $1-x^{-1}\leq g(x)\leq x-1.$

Recently in \cite{yamazaki},  the following extension of Ando-Hiai inequality has been shown: Let $g\in \mathscr{L}$, $\mathbb{A}=\left( {{A}_{1}},\ldots ,{{A}_{n}} \right)$ be an $n$-tuple of positive definite matrices and $w=\left( {{w}_{1}},\ldots ,{{w}_{n}} \right)$ be a weight vector. 
Then the implication
\begin{equation}\label{yamaz_ineq_p}
{{\sigma }_{g}}\left( w;\mathbb{A} \right)\le I\Rightarrow {{\sigma }_{{{g}_{p}}}}\left( w;{{\mathbb{A}}^{p}} \right)\le I
\end{equation}
 holds for all $p\ge 1$, where ${{g}_{p}}\left( x \right)=pg\left( {{x}^{\frac{1}{p}}} \right)$.

Throughout this paper we assume that $g\in \mathscr{L}$. Our target in this article is to present generalizations and counterparts of \eqref{6} and \eqref{yamaz_ineq_p}   via Kantorovich constant $K\left( h,2 \right)=\frac{{{\left( h+1 \right)}^{2}}}{4h}$.  In applications, we give an analogous result of \cite{gumus} for $n$-tuple of positive definite matrices. Our results extend the results appearing in \cite{4} to the context of the solution of the generalized Karcher equation (GKE) and present new generalizations that reflect the behavior of these means under positive linear maps and operator monotone functions.

Further, we present a natural extension of the inequality \cite{bourin}
\begin{equation}\label{bourin_geo_ineq}
\left<(A\sharp_v B)x,x\right>\leq \left<Ax,x\right>\sharp_v\left<Bx,x\right>
\end{equation}
valid for all unit vectors $x\in\mathbb{C}^k,$  $0\leq v\leq 1$ and positive definite matrices $A,B\in\mathcal{M}_k$. Many other results generalizing the action of operator monotone functions on two matrices will be presented too.

\section{Reverses of \eqref{6} and their refinements }

In this section we present the reversed versions of \eqref{6} first, then we prove refinements using the well known Kantorovich inequality and its refinement.\\
First, we have the following useful lemma \cite[Corollary 2.36]{pe}.
\begin{lemma}
Let $\left( {{A}_{1}},\ldots ,{{A}_{n}} \right)$ be an $n$-tuple of positive definite matrices in $\mathcal{M}_k$ with $mI\le {{A}_{i}}\le MI\left( i=1,\ldots ,n \right)$ for some scalars $0<m<M$, and $w=\left( {{w}_{1}},\ldots ,{{w}_{n}} \right)$ be a  weight vector. If $f,g:[m,M]\to (0,\infty)$ are convex, then
\begin{equation}\label{8}
\sum\limits_{i=1}^{n}{{{w}_{i}}f\left( {{A}_{i}} \right)}\le \mu \left( m,M,f,g \right)g\left( \sum\limits_{i=1}^{n}{{{w}_{i}}{{A}_{i}}} \right)
\end{equation}
where
\[\mu \left( m,M,f,g \right)=\max \left\{ \frac{1}{g\left( t \right)}\left( \frac{M-t}{M-m}f\left( m \right)+\frac{t-m}{M-m}f\left( M \right) \right):~ m\le t\le M \right\}.\]
In particular,
\begin{equation}\label{7}
\sum_{i=1}^{n}w_if(A_i)\leq \mu \left( m,M,f \right)f\left(\sum_{i=1}^{n}w_i A_i\right),
\end{equation}
where
\[\mu \left( m,M,f \right)= \max \left\{ \frac{1}{f\left( t \right)}\left( \frac{M-t}{M-m}f\left( m \right)+\frac{t-m}{M-m}f\left( M \right) \right):~ m\le t\le M \right\}.\]
\end{lemma}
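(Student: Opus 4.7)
The plan is to reduce the matrix inequality to scalar inequalities via convexity and functional calculus, applied in two stages --- a standard Mond--Pe\v{c}ari\'c style argument.

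First, I would exploit the convexity of $f$ on $[m,M]$: the chord through $(m,f(m))$ and $(M,f(M))$ lies above the graph, giving
\[f(\lambda)\le \frac{M-\lambda}{M-m}f(m)+\frac{\lambda-m}{M-m}f(M),\qquad \lambda\in[m,M].\]
Since each $A_i$ is self-adjoint with spectrum in $[m,M]$, functional calculus upgrades this pointwise bound to the operator inequality
\[f(A_i)\le \frac{MI-A_i}{M-m}f(m)+\frac{A_i-mI}{M-m}f(M).\]
Multiplying by $w_i\ge 0$, summing, and using $\sum_i w_i=1$, I set $S:=\sum_i w_i A_i$ and obtain
\[\sum_{i=1}^n w_i f(A_i)\le \frac{MI-S}{M-m}f(m)+\frac{S-mI}{M-m}f(M).\]

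Next, I would invoke the very definition of $\mu(m,M,f,g)$: for every $t\in[m,M]$,
\[\frac{M-t}{M-m}f(m)+\frac{t-m}{M-m}f(M)\le \mu(m,M,f,g)\,g(t).\]
Because $mI\le S\le MI$, the spectrum of $S$ sits inside $[m,M]$, so a second application of functional calculus turns this scalar inequality into
\[\frac{MI-S}{M-m}f(m)+\frac{S-mI}{M-m}f(M)\le \mu(m,M,f,g)\,g(S).\]
Chaining with the bound from the previous paragraph delivers \eqref{8}. The particular case \eqref{7} then falls out by specializing $g=f$.

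I do not expect a serious obstacle: the scheme is entirely routine once one realizes that the role of the outer function $g$ is only to normalize the right-hand side. The only points that deserve checking are that $g(t)>0$ throughout $[m,M]$ so that the maximum defining $\mu$ is legitimate (built into the hypothesis $g:[m,M]\to(0,\infty)$) and that the maximum is actually attained, which holds by continuity on a compact interval. It is worth noting, moreover, that the convexity of $g$ assumed in the statement is not actually used in the argument; convexity of $f$ is what powers the chord estimate that drives the whole proof.
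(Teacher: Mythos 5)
Your argument is correct and is exactly the standard Mond--Pe\v{c}ari\'c chord-plus-normalization argument; the paper itself offers no proof, quoting the lemma directly from \cite[Corollary 2.36]{pe}, whose proof proceeds along the same lines. Your side remark that convexity of $g$ is never used (only positivity, and enough regularity for the maximum and the functional calculus) is also accurate.
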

Since $f(t)=t^{-1}$ is convex on $[m,M]$, inequality \eqref{7} can be applied. For this function, direct calculations show that  (see the proof of Theorem 2 in \cite{fujii})
\[\max \left\{ \frac{1}{{{t}^{-1}}}\left( \frac{M-t}{M-m}{{m}^{-1}}+\frac{t-m}{M-m}{{M}^{-1}} \right):~ m\le t\le M \right\}=\frac{{{\left( M+m \right)}^{2}}}{4Mm}.\]

Consequently,
\begin{equation}\label{inverse_needed}
\left(\sum_{i=1}^{n}w_i A_i^{-1} \right)\leq \frac{{{\left( M+m \right)}^{2}}}{4Mm} \left(\sum_{i=1}^{n}w_iA_i\right)^{-1}.
\end{equation}

The quantity $\frac{{{\left( M+m \right)}^{2}}}{4Mm}$ is well known as the Kantorovich constant and is denoted by $K(h,2),$ where $K(h,2)=\frac{(h+1)^2}{4h}$ and $h=\frac{M}{m}.$ This notation will be used in the sequel.

Notice that replacing $A_i$ by $A_i^{-1}$ in \eqref{inverse_needed} and $[m,M]$ by $\left[\frac{1}{M},\frac{1}{m}\right]$ imply
\begin{equation}\label{inverse_needed_2}
\left(\sum_{i=1}^{n}w_i A_i \right)\leq K(h,2) \left(\sum_{i=1}^{n}w_iA_i^{-1}\right)^{-1}, \text{ } h=\frac{M}{m}.
\end{equation}
\begin{proposition}\label{02}
	Let $\mathbb{A}=\left( {{A}_{1}},\ldots ,{{A}_{n}} \right)$ be an $n$-tuple of positive definite matrices in $\mathcal{M}_k$ with $mI\le {{A}_{i}}\le MI\left( i=1,\ldots ,n \right)$ for some scalars $0<m<M,$ and $w=\left( {{w}_{1}},\ldots ,{{w}_{n}} \right)$ be a  weight vector. Then
	\begin{equation}\label{03}
	\sum\limits_{i=1}^{n}{{{w}_{i}}{{A}_{i}}}\le K\left( h,2 \right){{\sigma }_{g}}\left( w;\mathbb{A} \right),
	\end{equation}
	and
	\begin{equation}\label{04}
	{{\sigma }_{g}}\left( w;\mathbb{A} \right)\le K\left( h,2 \right){{\left( \sum\limits_{i=1}^{n}{{{w}_{i}}A_{i}^{-1}} \right)}^{-1}}.
	\end{equation}
\end{proposition}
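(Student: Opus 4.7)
The proof is essentially a chaining of inequalities already assembled in the introduction of Section~2, and the plan is to exploit both directions of the harmonic/arithmetic sandwich \eqref{6} together with the Kantorovich-type bound \eqref{inverse_needed_2}. The key observation is that the generalized Karcher mean $\sigma_g(w;\mathbb{A})$ is trapped between the weighted harmonic and arithmetic means, while Kantorovich bounds the arithmetic mean by a multiple of the harmonic one; so one just has to insert $\sigma_g$ into the correct slot.

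For \eqref{03}, the plan is to start from \eqref{inverse_needed_2}, which gives
\[
\sum_{i=1}^{n} w_i A_i \;\le\; K(h,2)\left(\sum_{i=1}^{n} w_i A_i^{-1}\right)^{-1},
\]
and then use the lower estimate in \eqref{6}, namely $\left(\sum_{i=1}^{n} w_i A_i^{-1}\right)^{-1} \le \sigma_g(w;\mathbb{A})$, to replace the right-hand side and conclude $\sum_{i=1}^{n} w_i A_i \le K(h,2)\, \sigma_g(w;\mathbb{A})$.

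For \eqref{04}, I would run the same two ingredients in the opposite order: the upper estimate in \eqref{6} yields $\sigma_g(w;\mathbb{A}) \le \sum_{i=1}^{n} w_i A_i$, and then \eqref{inverse_needed_2} bounds this arithmetic mean by $K(h,2)\left(\sum_{i=1}^{n} w_i A_i^{-1}\right)^{-1}$, giving the desired conclusion.

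There is no real obstacle here since all the work is already contained in \eqref{6} and \eqref{inverse_needed_2}; the only thing to be mildly careful about is that the spectral bounds $mI\le A_i \le MI$ are assumed uniformly in $i$, which is exactly what is needed to apply \eqref{inverse_needed_2} with the single constant $h=M/m$. Once that is noted, the two inequalities follow by a one-line chain each.
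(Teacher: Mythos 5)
Your proposal is correct and follows exactly the paper's own argument: both inequalities are obtained by chaining \eqref{inverse_needed_2} with the appropriate side of \eqref{6}, in the same order as the paper's proof. Nothing further is needed.
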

\begin{proof}
	Using  \eqref{inverse_needed_2}, then \eqref{6} imply
	\begin{align}
	\sum\limits_{i=1}^{n}{{{w}_{i}}{{A}_{i}}}&\leq  K\left( h,2 \right){{\left( \sum\limits_{i=1}^{n}{{{w}_{i}}A_{i}^{-1}} \right)}^{-1}} \label{05}\\ 
	& \le K\left( h,2 \right){{\sigma }_{g}}\left( w;\mathbb{A} \right).  \nonumber
	\end{align}
 This proves \eqref{03}.
	
	The inequality \eqref{04} follows from RHS of \eqref{6} and the inequality \eqref{05}.
\end{proof}

Next, we deduce refinements of both inequalities \eqref{03} and \eqref{04}. 

\begin{proposition}\label{18}
	Let $\mathbb{A}=\left( {{A}_{1}},\ldots ,{{A}_{n}} \right)$ be a $n$-tuple of positive definite matrices with $mI\le {{A}_{i}}\le MI\left( i=1,\ldots ,n \right)$ for some scalars $0<m<M$, and $w=\left( {{w}_{1}},\ldots ,{{w}_{n}} \right)$ be a weight vector. Then
\begin{equation}\label{ineq1}
\sum\limits_{i=1}^{n}{{{w}_{i}}{{A}_{i}}}\le \sum\limits_{i=1}^{n}{{{w}_{i}}{{M}^{\frac{{{m}^{-1}}I-A_{i}^{-1}}{{{m}^{-1}}-{{M}^{-1}}}}}}{{m}^{\frac{A_{i}^{-1}-{{M}^{-1}I}}{{{m}^{-1}}-{{M}^{-1}}}}}\le K\left( h,2 \right){{\sigma }_{g}}\left( w;\mathbb{A} \right),
\end{equation}	
	and
	\begin{equation}\label{ineq2}
	{{\sigma }_{g}}\left( w;\mathbb{A} \right)\le \sum\limits_{i=1}^{n}{{{w}_{i}}{{M}^{\frac{{{m}^{-1}}I-A_{i}^{-1}}{{{m}^{-1}}-{{M}^{-1}}}}}}{{m}^{\frac{A_{i}^{-1}-{{M}^{-1}I}}{{{m}^{-1}}-{{M}^{-1}}}}}\le K\left( h,2 \right){{\left( \sum\limits_{i=1}^{n}{{{w}_{i}}A_{i}^{-1}} \right)}^{-1}}.
	\end{equation}
\end{proposition}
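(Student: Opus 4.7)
The plan is to establish two key estimates that refine the scalar bounds underlying Proposition~\ref{02}. Writing $\lambda_{i}:=(A_{i}^{-1}-M^{-1}I)/(m^{-1}-M^{-1})$, a self-adjoint operator with spectrum in $[0,1]$ that commutes with $A_{i}$, both \eqref{ineq1} and \eqref{ineq2} revolve around the common middle term $\Phi:=\sum_{i=1}^{n}w_{i}M^{1-\lambda_{i}}m^{\lambda_{i}}$. The two ingredients I need are: (a) the termwise bound $A_{i}\leq M^{1-\lambda_{i}}m^{\lambda_{i}}$, which after weighted summation yields $\sum_{i}w_{i}A_{i}\leq\Phi$; and (b) the mixed bound $\Phi\leq K(h,2)(\sum_{i}w_{i}A_{i}^{-1})^{-1}$. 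With these in hand the two chains follow by pairing with the arithmetic and harmonic sides of \eqref{6}: for \eqref{ineq1} the left half is (a) summed and the right half is (b) followed by $(\sum_{i}w_{i}A_{i}^{-1})^{-1}\leq\sigma_{g}(w;\mathbb{A})$; for \eqref{ineq2} the left half uses $\sigma_{g}(w;\mathbb{A})\leq\sum_{i}w_{i}A_{i}$ from \eqref{6} combined with (a), and the right half is exactly (b).

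For (a), the scalar content is that for $t\in[m,M]$ and $\lambda(t):=(t^{-1}-M^{-1})/(m^{-1}-M^{-1})$ one has $t\leq M^{1-\lambda(t)}m^{\lambda(t)}$. This is immediate from weighted AM--GM on reciprocals: by construction $t^{-1}=\lambda(t)m^{-1}+(1-\lambda(t))M^{-1}\geq m^{-\lambda(t)}M^{-(1-\lambda(t))}$, and inverting gives the claim. Because $\lambda_{i}$ is itself a continuous function of $A_{i}^{-1}$, the scalar inequality lifts at once to the operator inequality $A_{i}\leq M^{1-\lambda_{i}}m^{\lambda_{i}}$ via the functional calculus of $A_{i}^{-1}$.

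The substantive step is (b). I plan to apply the lemma above (inequality \eqref{8}) to the tuple $(A_{1}^{-1},\ldots,A_{n}^{-1})$, whose spectra lie in $[M^{-1},m^{-1}]$. Set $f(s):=M^{1-\lambda(s)}m^{\lambda(s)}$ with $\lambda(s)=(s-M^{-1})/(m^{-1}-M^{-1})$, so that $f(A_{i}^{-1})=M^{1-\lambda_{i}}m^{\lambda_{i}}$, and set $g(s):=s^{-1}$. A short computation gives $f(s)=M\exp\bigl((s-M^{-1})\log(m/M)/(m^{-1}-M^{-1})\bigr)$, a positive exponential in $s$, hence convex; $g$ is convex and positive on $[M^{-1},m^{-1}]$. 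Then \eqref{8} yields $\Phi\leq\mu(M^{-1},m^{-1},f,g)(\sum_{i}w_{i}A_{i}^{-1})^{-1}$, and the only remaining work is to compute $\mu$. Since $f(M^{-1})=M$ and $f(m^{-1})=m$, the chord of $f$ between the endpoints simplifies to the affine function $M+m-Mms$, so $\mu=\max_{s\in[M^{-1},m^{-1}]}s(M+m-Mms)$. This downward-opening quadratic attains its maximum at $s^{\ast}=(M+m)/(2Mm)=(M^{-1}+m^{-1})/2$, which lies in the interval, with maximum value $(M+m)^{2}/(4Mm)=K(h,2)$.

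The main obstacle is identifying the correct auxiliary function $f$ for \eqref{8}: one must notice that the middle term $\Phi$ can be rewritten as $\sum w_{i}f(A_{i}^{-1})$ for a function $f$ that is exponential (hence convex) in $s$ and whose chord degenerates into the simple quadratic $M+m-Mms$, making the optimisation collapse to exactly the Kantorovich constant rather than to some more complicated expression.
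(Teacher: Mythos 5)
Your proof is correct and follows essentially the same route as the paper's: the first inequality comes from the AM--GM/log-convexity observation lifted through functional calculus, and the second from the lemma \eqref{8} with the Kantorovich constant $K(h,2)=\frac{(M+m)^2}{4Mm}$ emerging from maximizing the chord against $s\mapsto s^{-1}$. The only cosmetic difference is that you apply \eqref{8} directly to the convex exponential interpolant on $\left[M^{-1},m^{-1}\right]$, whereas the paper first bounds that interpolant by its chord $L$ on $[m,M]$, applies \eqref{8} to the pair $(L,t^{-1})$, and then substitutes $A_i\mapsto A_i^{-1}$; since $\mu$ depends on the numerator only through its endpoint values, both computations coincide.
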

\begin{proof}
	Notice that the function $f(t)=t^{-1}$ is log-convex on any positive interval $[m,M].$ Consequently,
\begin{equation}\label{log-convex_m}
f(t)\leq f(m)^{\frac{M-t}{M-m}}f(M)^{\frac{t-m}{M-m}}\leq L(t),
\end{equation}
where 
$$L(t)=\frac{M-t}{M-m}f(m)+\frac{t-m}{M-m}f(M),\text{  } m\leq t\leq M.$$
Now, if $mI\leq A_i\leq MI$ $\left( i=1,\ldots ,n \right)$, we may apply functional calculus in \eqref{log-convex_m} to obtain
$$f(A_i)\leq f(m)^{\frac{MI-A_i}{M-m}}f(M)^{\frac{A_i-mI}{M-m}}\leq L(A_i).$$
Now since $w_i\geq 0,$  the above inequality implies
\begin{equation}\label{log_convex_2}
\sum_{i=1}^{n}w_if(A_i)\leq \sum_{i=1}^{n}w_i f(m)^{\frac{MI-A_i}{M-m}}f(M)^{\frac{A_i-mI}{M-m}}\leq \sum_{i=1}^{n}w_iL(A_i).
\end{equation}
Using \eqref{8}, with $f$ replaced by $L$ and $g$ replaced by $f$, we obtain
$$\sum_{i=1}^{n}w_iL(A_i)\leq \mu(m,M,L,f) f\left(\sum_{i=1}^{n}w_iA_i\right),$$ and hence \eqref{log_convex_2} implies
\begin{align*}
\notag \sum_{i=1}^{n}w_if(A_i)&\leq \sum_{i=1}^{n}w_i f(m)^{\frac{MI-A_i}{M-m}}f(M)^{\frac{A_i-mI}{M-m}}\\
\notag &\leq \mu(m,M,L,f) f\left(\sum_{i=1}^{n}w_iA_i\right).
\end{align*}
Since $f(t)=t^{-1}$ and noting that $L$ and $f$ coincide at $m$ and $M$, \eqref{inverse_needed} implies
\begin{equation}\label{log_convex_4}
\sum_{i=1}^{n}w_iA_i^{-1}\leq \sum_{i=1}^{n}w_im^{\frac{A_i-MI}{M-m}}M^{\frac{mI-A_i}{M-m}}\leq \frac{{{\left( M+m \right)}^{2}}}{4Mm}\left(\sum_{i=1}^{n}w_iA_i\right)^{-1}.
\end{equation}
Replacing $A_i$ by $A_i^{-1}$ and $\left[m,M\right]$ by $\left[\frac{1}{M},\frac{1}{m}\right],$ \eqref{log_convex_4} becomes
\begin{align*}
\sum_{i=1}^{n}w_iA_i\leq \sum\limits_{i=1}^{n}{{{w}_{i}}{{M}^{\frac{{{m}^{-1}}I-A_{i}^{-1}}{{{m}^{-1}}-{{M}^{-1}}}}}}{{m}^{\frac{A_{i}^{-1}-{{M}^{-1}I}}{{{m}^{-1}}-{{M}^{-1}}}}}\leq \frac{{{\left( M+m \right)}^{2}}}{4Mm}\left(\sum_{i=1}^{n}w_iA_i^{-1}\right)^{-1},
\end{align*}
which implies \eqref{ineq1} using \eqref{6}.\\
Now \eqref{ineq2} follows immediately noting \eqref{6} and  \eqref{ineq1}. This completes the proof.	
\end{proof}

In the following, we complement the inequality \eqref{yamaz_ineq_p}.
\begin{theorem}
Let $\mathbb{A}=\left( {{A}_{1}},\ldots ,{{A}_{n}} \right)$ be a $n$-tuple of positive definite matrices in $\mathcal{M}_k$ with $mI\le {{A}_{i}}\le MI\left( i=1,\ldots ,n \right)$ for some scalars $0<m<M$ and $w=\left( {{w}_{1}},\ldots ,{{w}_{n}} \right)$ be a weight vector. Then for all $p\ge 1$ and every unitarily invariant norm $|||\cdot|||$,
\[|||{{\sigma }_{g}}\left( w;{{\mathbb{A}}^{p}} \right)|||\le K\left( m,M,p \right)K{{\left( h,2 \right)}^{p}}|||{{\sigma }_{g}}{{\left( w;\mathbb{A} \right)}^{p}}|||,\] 
where
\[K\left( m,M,p \right)=\frac{m{{M}^{p}}-M{{m}^{p}}}{\left( p-1 \right)\left( M-m \right)}{{\left( \frac{p-1}{p}\frac{{{M}^{p}}-{{m}^{p}}}{m{{M}^{p}}-M{{m}^{p}}} \right)}^{p}}.\]
In particular, if ${{\sigma }_{g}}\left( w;\mathbb{A} \right)\le I$, we have
\begin{equation}\label{Ando.Hiai}
{{\sigma }_{g}}\left( w;{{\mathbb{A}}^{p}} \right)\le K\left( m,M,p \right)K{{\left( h,2 \right)}^{p}}.
\end{equation}
\end{theorem}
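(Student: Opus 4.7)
The plan is to chain three inequalities, all available in the paper. First, apply the right-hand side of \eqref{6} to the tuple $\mathbb{A}^p=(A_1^p,\ldots,A_n^p)$, giving
\[
\sigma_g(w;\mathbb{A}^p)\le \sum_{i=1}^{n} w_i A_i^p.
\]
Second, since $t\mapsto t^p$ is convex and positive on $[m,M]$ for $p\ge 1$, Lemma~2.1 with $f(t)=g(t)=t^p$ supplies the Mond--Pe\v{c}ari\'c type bound
\[
\sum_{i=1}^{n} w_i A_i^p\le K(m,M,p)\,\Bigl(\sum_{i=1}^{n} w_i A_i\Bigr)^{p},
\]
where the constant $K(m,M,p)$ is the maximum of $\mu(m,M,t^p,t^p)$ on $[m,M]$, obtained by a routine single-variable optimization (set the derivative of $t^{-p}\bigl((M-t)m^p+(t-m)M^p\bigr)/(M-m)$ to zero and simplify). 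These two steps combine into the operator inequality
\[
\sigma_g(w;\mathbb{A}^p)\le K(m,M,p)\,\Bigl(\sum_{i=1}^{n} w_i A_i\Bigr)^{p}.
\]

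Third, the reverse bound \eqref{03} already proved in Proposition~\ref{02} gives $0\le \sum_{i=1}^n w_i A_i\le K(h,2)\,\sigma_g(w;\mathbb{A})$. Since $t\mapsto t^p$ is not operator monotone for $p>1$, I cannot square this inequality in operator order, but Weyl's monotonicity principle yields $\lambda_j\bigl((\sum_i w_iA_i)^p\bigr)\le K(h,2)^p\,\lambda_j\bigl(\sigma_g(w;\mathbb{A})^p\bigr)$ for every $j$; symmetry of unitarily invariant norms as functions of the singular values then gives
\[
\bigl|\!\bigl|\!\bigl|\bigl(\textstyle\sum_i w_iA_i\bigr)^p\bigr|\!\bigr|\!\bigr|\le K(h,2)^p\,|||\sigma_g(w;\mathbb{A})^p|||.
\]
Passing the previous operator inequality to $|||\cdot|||$ and substituting here produces the main conclusion
\[
|||\sigma_g(w;\mathbb{A}^p)|||\le K(m,M,p)\,K(h,2)^p\,|||\sigma_g(w;\mathbb{A})^p|||.
\]

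For the in-particular statement \eqref{Ando.Hiai}, the extra hypothesis $\sigma_g(w;\mathbb{A})\le I$ sharpens \eqref{03} to $\sum_i w_i A_i\le K(h,2)\,I$. Because the right-hand side is a scalar multiple of $I$, raising to the $p$-th power is legitimate at the operator level (the eigenvalues are simply dominated by $K(h,2)$, hence after $p$-th power by $K(h,2)^p$), so $\bigl(\sum_i w_i A_i\bigr)^p\le K(h,2)^p I$. Substituting into the boxed operator inequality yields \eqref{Ando.Hiai} directly, without invoking any norm.

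The main obstacle I anticipate is the identification $\mu(m,M,t^p,t^p)=K(m,M,p)$: the calculus is elementary but one must be careful with the sign of $p-1$ and with the fact that the critical point of the ratio $L(t)/t^p$ actually lies in $[m,M]$ and attains a maximum (not a minimum). The conceptual point worth emphasizing is that the failure of operator monotonicity of $t\mapsto t^p$ for $p>1$ is precisely the reason the main inequality is stated in unitarily invariant norms, while the additional normalization $\sigma_g(w;\mathbb{A})\le I$ collapses this difficulty and allows the operator-level conclusion \eqref{Ando.Hiai}.
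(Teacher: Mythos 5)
Your proof is correct and follows essentially the same three-step chain as the paper: the right-hand side of \eqref{6} applied to $\mathbb{A}^p$, the Mond--Pe\v{c}ari\'c power bound $\sum_i w_iA_i^p\le K(m,M,p)\bigl(\sum_i w_iA_i\bigr)^p$, and then \eqref{03} raised to the $p$-th power. The only differences are cosmetic: you extract the constant $K(m,M,p)$ from the paper's own Lemma in Section~2 rather than citing Mi\'ci\'c et al., you invoke Weyl's eigenvalue monotonicity where the paper uses the equivalent fact that $X\le Y$ implies $X^p\le UY^pU^*$ for some unitary $U$, and your direct operator-level derivation of \eqref{Ando.Hiai} is a slightly cleaner variant of the paper's passage through the spectral norm.
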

\begin{proof}
	We have
	\begin{equation}\label{3}
\begin{aligned}
{{\sigma }_{g}}\left( w;{{\mathbb{A}}^{p}} \right)&\le \sum\limits_{i=1}^{n}{{{w}_{i}}A_{i}^{p}} \\ 
& \le K\left( m,M,p \right){{\left( \sum\limits_{i=1}^{n}{{{w}_{i}}{{A}_{i}}} \right)}^{p}} \\ 
& \le K\left( m,M,p \right)K{{\left( h,2 \right)}^{p}}U{{\sigma }_{g}}{{\left( w;\mathbb{A} \right)}^{p}}{{U}^{*}}  
\end{aligned}	
	\end{equation}
where the first inequality is due to RHS of \eqref{6} and the fact that $mI\le {{A}_{i}}\le MI$ implies ${{m}^{p}}I\le A_{i}^{p}\le {{M}^{p}}I$ ($p>0$), the second one is due to \cite[Remark 4.14]{micic}, and the last inequality follows from \eqref{03} and the fact that for two positive definite matrices $X,Y$ with $X\le Y$ there exists a unitary matrix $U$, such that ${{X}^{p}}\le U{{Y}^{p}}{{U}^{*}}$, for $p>0$, \cite[Exercise III.5.5]{bhatia}. 

One can infer from the above discussion  
	\[|||{{\sigma }_{g}}\left( w;{{\mathbb{A}}^{p}} \right)|||\le K\left( m,M,p \right)K{{\left( h,2 \right)}^{p}}|||{{\sigma }_{g}}{{\left( w;\mathbb{A} \right)}^{p}}|||,\] 
for all $p\ge 1$. Consequently,
\[{{\sigma }_{g}}\left( w;{{\mathbb{A}}^{p}} \right)\le \left\| {{\sigma }_{g}}\left( w;{{A}^{p}} \right) \right\|I\le K\left( m,M,p \right)K{{\left( h,2 \right)}^{p}}\left\| {{\sigma }_{g}}{{\left( w;\mathbb{A} \right)}^{p}} \right\|I,\] where $\left\| \cdot \right\|$ is the spectral (operator) norm. The assumption ${{\sigma }_{g}}\left( w;\mathbb{A} \right)\le I$ implies \eqref{Ando.Hiai}.
\end{proof}

\section{inequalities involving positive linear maps}
In this section we present several relations that describe the behavior of the solution of the GKE under positive linear maps. This study is usually referred to as information monotonicity.  The following lemma is needed to prove our results. 
\begin{lemma}\label{4.1}
\cite{yamazaki} Let $\mathbb{A}=\left( {{A}_{1}},\ldots ,{{A}_{n}} \right)$ be an $n$-tuple of positive definite matrices in $\mathcal{M}_k$ and $w=\left( {{w}_{1}},\ldots ,{{w}_{n}} \right)$ be a weight vector.
\begin{itemize}
	\item[(i)] $\sum\nolimits_{i=1}^{n}{{{w}_{i}}g\left( {{A}_{i}} \right)}\ge 0$ implies ${{\sigma }_{g}}\left( w;\mathbb{A} \right)\ge I$.
	\item[(ii)] ${{\sigma }_{g}}\left( w;{{X}^{*}}\mathbb{A}X \right)={{X}^{*}}{{\sigma }_{g}}\left( w;\mathbb{A} \right)X$ for all invertible $X$.
\end{itemize}
\end{lemma}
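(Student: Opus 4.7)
The plan. The two parts of the lemma have rather different flavor, so I will handle them separately, with part (ii) being a direct computational argument via a unitary intertwiner, and part (i) requiring a sub-solution principle for the defining equation.

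For part (ii), the idea is to produce a single unitary $T$ that simultaneously conjugates both families of sandwiched matrices. Set $S:=\sigma_g(w;\mathbb{A})$ and $Y:=X^*SX$; by uniqueness of the positive solution of \eqref{5}, it suffices to verify that $Y$ satisfies the defining equation for $\sigma_g(w;X^*\mathbb{A}X)$. Define $T:=S^{1/2}XY^{-1/2}$. Using $Y=X^*SX$ one checks $T^*T=Y^{-1/2}X^*SXY^{-1/2}=I$, and from $Y^{-1}=X^{-1}S^{-1}(X^*)^{-1}$ one has $XY^{-1}X^*=S^{-1}$, so $TT^*=S^{1/2}XY^{-1}X^*S^{1/2}=I$; hence $T$ is unitary. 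A direct computation then gives
\[
Y^{-1/2}X^*A_iXY^{-1/2} = T^{*}\bigl(S^{-1/2}A_iS^{-1/2}\bigr)T,
\]
and since unitary conjugation commutes with $g$ via functional calculus, summing against $w_i$ yields
\[
\sum_{i=1}^n w_i\,g\bigl(Y^{-1/2}X^*A_iXY^{-1/2}\bigr)=T^{*}\Bigl(\sum_{i=1}^n w_i\,g\bigl(S^{-1/2}A_iS^{-1/2}\bigr)\Bigr)T=0,
\]
which is the required identity.

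For part (i), I would first note the congruence identity $\sigma_g(w;S^{-1/2}\mathbb{A}S^{-1/2})=I$, which is immediate from (ii) applied with $X=S^{-1/2}$ where $S=\sigma_g(w;\mathbb{A})$. Writing $B_i:=S^{-1/2}A_iS^{-1/2}$, this reads $\sum_i w_i\,g(B_i)=0$. The task then is to pass the hypothesis $F(I):=\sum_i w_i\,g(A_i)\ge 0$ through this congruence into the Loewner comparison $I\le S$. The natural vehicle is a sub-solution principle: the map $F(X):=\sum_i w_i\,g(X^{-1/2}A_iX^{-1/2})$ has $S$ as its unique positive zero, and the goal is to show that $F(X)\ge 0$ forces $X\le S$; specializing to $X=I$ then gives the conclusion.

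The ingredients I would invoke for the sub-solution step are the homogeneity $\sigma_g(w;\lambda\mathbb{A})=\lambda\,\sigma_g(w;\mathbb{A})$ (verified directly by the substitution $X\mapsto\lambda X$ in \eqref{5}) and the Loewner monotonicity $\mathbb{A}\le\mathbb{B}\Rightarrow\sigma_g(w;\mathbb{A})\le\sigma_g(w;\mathbb{B})$ established in \cite{palfia}. The main obstacle is precisely this sub-solution step: in the Karcher case $g(x)=\log x$ one has the clean scale-shift $F(\lambda X)=F(X)-(\log\lambda)I$ that makes the implication immediate, but for a general $g\in\mathscr{L}$ no such identity is available. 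To push the argument through one would couple homogeneity and monotonicity with a continuity/connectedness argument along a suitable one-parameter path---either the scaling ray $t\mapsto tS$ or a convex interpolation of $\mathbb{A}$ with the tuple of identities---to rule out $S\not\ge I$.
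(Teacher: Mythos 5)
The paper itself offers no proof of this lemma --- it is imported verbatim from \cite{yamazaki} --- so the only meaningful comparison is with the cited source. Your part (ii) is correct and complete: the unitary $T=S^{1/2}XY^{-1/2}$ with $Y=X^*SX$ does satisfy $T^*T=TT^*=I$, the identity $Y^{-1/2}X^*A_iXY^{-1/2}=T^*\bigl(S^{-1/2}A_iS^{-1/2}\bigr)T$ checks out, and since $g(T^*CT)=T^*g(C)T$ for unitary $T$, the tuple $X^*\mathbb{A}X$ has $Y$ as a positive solution of \eqref{5}; uniqueness finishes it. This is essentially the same congruence-invariance argument as in the literature.

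Part (i), however, has a genuine gap, and you have candidly flagged it yourself: the entire content of the statement is the ``sub-solution principle'' $F(I)\ge 0\Rightarrow I\le S$, and your proposal only conjectures a way to obtain it. The preliminary reduction via (ii) with $X=S^{-1/2}$ merely restates the defining equation and adds nothing, and the suggested combination of homogeneity, Loewner monotonicity of $\sigma_g$, and a ``continuity/connectedness'' argument along a one-parameter path does not obviously close the gap --- connectedness of a scalar parameter set cannot by itself produce a Loewner-order conclusion, and no concrete path or comparison tuple is exhibited. The proof in \cite{yamazaki} (building on \cite{palfia}) goes through the construction of $\sigma_g(w;\mathbb{A})$ as the limit, as $s\to 0^+$, of the unique fixed points $M_s$ of the Thompson-metric contractions $X\mapsto\sum_i w_i\,X^{1/2}f_s\bigl(X^{-1/2}A_iX^{-1/2}\bigr)X^{1/2}$ with $f_s=1+sg$: the hypothesis gives $\sum_i w_i f_s(A_i)\ge I$, so the Picard iteration started at $X_0=I$ is nondecreasing by monotonicity of the iteration map, whence $M_s\ge I$ and, letting $s\to 0^+$, $\sigma_g(w;\mathbb{A})\ge I$. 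Without that approximation machinery (or an equivalent order-theoretic characterization of the solution), part (i) remains unproved in your write-up.
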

Our first result in this direction is the study of information monotonicity of $\sigma_g$. This result extends the corresponding result of \cite{4}, where the power mean $P_t$ was studied. It should be noted here that the first inequality was recently established in \cite[Theorem 6.4 (7)]{palfia}, however, for the reader convenience, we present here a simple proof for it.
\begin{theorem}\label{thm_first_linear_map}
	Let $\mathbb{A}=\left( {{A}_{1}},\ldots ,{{A}_{n}} \right)$ be a $n$-tuple of positive definite matrices in $\mathcal{M}_k$ with $mI\le {{A}_{i}}\le MI\left( i=1,\ldots ,n \right)$ for some scalars $0<m<M$ and $w=\left( {{w}_{1}},\ldots ,{{w}_{n}} \right)$ be a weight vector. Then, for the unital positive linear map $\Phi$,
\[\Phi \left( {{\sigma }_{g}}\left( w;\mathbb{A} \right) \right)\le {{\sigma }_{g}}\left( w;\Phi \left( \mathbb{A} \right) \right)\le K\left( h,2 \right)\Phi \left( {{\sigma }_{g}}\left( w;\mathbb{A} \right) \right).\]
\end{theorem}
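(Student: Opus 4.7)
The plan is to prove the two inequalities separately, since the upper bound follows by chaining previously established estimates while the lower bound requires a genuine application of the operator concavity inherent to $\mathscr{L}$.

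For the lower bound $\Phi(\sigma_g(w;\mathbb{A}))\leq\sigma_g(w;\Phi(\mathbb{A}))$, I would set $X=\sigma_g(w;\mathbb{A})$ and $Y=\Phi(X)$, and try to verify the hypothesis of Lemma \ref{4.1}(i) on the tuple $Y^{-1/2}\Phi(\mathbb{A})Y^{-1/2}$. The key observation is that any $g\in\mathscr{L}\subset\mathscr{M}$ is operator concave on $(0,\infty)$ (a standard consequence of L\"owner's theorem). Consider the auxiliary map
\[\Psi(T)=Y^{-1/2}\Phi\bigl(X^{1/2}TX^{1/2}\bigr)Y^{-1/2};\]
it is positive because $\Phi$ is, and unital because $\Psi(I)=Y^{-1/2}\Phi(X)Y^{-1/2}=I$. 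Setting $B_i=X^{-1/2}A_iX^{-1/2}$, the Davis--Choi--Jensen inequality applied to the operator concave $g$ and the unital positive map $\Psi$ yields
\[g\bigl(Y^{-1/2}\Phi(A_i)Y^{-1/2}\bigr)=g(\Psi(B_i))\geq \Psi(g(B_i)).\]
Taking the weighted sum and invoking the defining equation $\sum w_i g(B_i)=0$ of $\sigma_g(w;\mathbb{A})$, I obtain $\sum w_i g(Y^{-1/2}\Phi(A_i)Y^{-1/2})\geq 0$. Then Lemma \ref{4.1}(i) gives $\sigma_g(w;Y^{-1/2}\Phi(\mathbb{A})Y^{-1/2})\geq I$, and Lemma \ref{4.1}(ii) rewrites this as $Y^{-1/2}\sigma_g(w;\Phi(\mathbb{A}))Y^{-1/2}\geq I$, i.e.\ $\sigma_g(w;\Phi(\mathbb{A}))\geq Y=\Phi(\sigma_g(w;\mathbb{A}))$.

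For the upper bound, I would combine the arithmetic upper estimate from \eqref{6}, now applied to the tuple $\Phi(\mathbb{A})$, with Proposition \ref{02}. Since $\Phi$ is unital and positive, the bounds $mI\leq A_i\leq MI$ pass to $mI\leq \Phi(A_i)\leq MI$, so the constant $K(h,2)$ with $h=M/m$ is the same for both tuples. Then
\[\sigma_g(w;\Phi(\mathbb{A}))\leq \sum_{i=1}^{n} w_i\Phi(A_i)=\Phi\!\left(\sum_{i=1}^{n} w_iA_i\right)\leq K(h,2)\,\Phi\bigl(\sigma_g(w;\mathbb{A})\bigr),\]
where the last step is the image under the positive linear $\Phi$ of inequality \eqref{03}.

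The main conceptual step is the first inequality, and the potential obstacle there is checking that the Davis--Choi--Jensen inequality applies in the precise form needed. This is handled by packaging the conjugation-by-$X^{1/2}$ together with $\Phi$ and the normalization by $Y^{-1/2}$ into a single unital positive map $\Psi$; once $\Psi(I)=I$ is verified, operator concavity of $g$ finishes the argument. The upper bound, by contrast, is essentially a bookkeeping exercise that contains no new analytic content beyond \eqref{6} and \eqref{03}.
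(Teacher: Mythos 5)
Your proof is correct and follows essentially the same route as the paper: both establish the lower bound by setting $X=\sigma_g(w;\mathbb{A})$, verifying the hypothesis of Lemma~\ref{4.1}(i) for the tuple $\Phi(X)^{-1/2}\Phi(\mathbb{A})\Phi(X)^{-1/2}$, and then invoking Lemma~\ref{4.1}(ii), while the upper bound is obtained in both cases by chaining the arithmetic-mean bound of \eqref{6} (applied to $\Phi(\mathbb{A})$) with the image of \eqref{03} under $\Phi$. The only real difference is that where the paper justifies the key step $\sum_{i}w_i g\bigl(\Phi(X)^{-1/2}\Phi(A_i)\Phi(X)^{-1/2}\bigr)\ge 0$ by citing Ando's inequality $\Phi(X\sigma_g A_i)\le\Phi(X)\sigma_g\Phi(A_i)$, you derive it directly from the Davis--Choi--Jensen inequality applied to the auxiliary unital positive map $\Psi$ --- a slightly more self-contained justification, and arguably a more careful one since $g\in\mathscr{L}$ is not nonnegative, so the textbook form of Ando's inequality does not apply verbatim.
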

\begin{proof}
	For the first inequality, let $X={{\sigma }_{g}}\left( w;\mathbb{A} \right)$. Then 
	\[0=\sum\limits_{i=1}^{n}{{{w}_{i}}g\left( {{X}^{-\frac{1}{2}}}{{A}_{i}}{{X}^{-\frac{1}{2}}} \right)}\text{ }\Rightarrow \text{ }0=\sum\limits_{i=1}^{n}{{{w}_{i}}\left( X{{\sigma }_{g}}{{A}_{i}} \right)},\]
	where $X\sigma_gA_i= {X}^{\frac{1}{2}}g\left( {{X}^{-\frac{1}{2}}}{{A}_{i}}{{X}^{-\frac{1}{2}}} \right){X}^{\frac{1}{2}}.$
	Taking $\Phi $, we obtain 
	\[0=\sum\limits_{i=1}^{n}{{{w}_{i}}\Phi \left( X{{\sigma }_{g}}{{A}_{i}} \right)}\le \sum\limits_{i=1}^{n}{{{w}_{i}}\left( \Phi \left( X \right){{\sigma }_{g}}\Phi \left( {{A}_{i}} \right) \right)}\] 
	where we used the well known Ando's inequality. Whence  
	\[0\le \sum\limits_{i=1}^{n}{{{w}_{i}}g\left( \Phi {{\left( X \right)}^{-\frac{1}{2}}}\Phi \left( {{A}_{i}} \right)\Phi {{\left( X \right)}^{-\frac{1}{2}}} \right)}.\] 
Applying the first part of Lemma \ref{4.1}, we obtain
\begin{align*}
\sum\limits_{i=1}^{n}{{{w}_{i}}g\left( \Phi {{\left( X \right)}^{-\frac{1}{2}}}\Phi \left( {{A}_{i}} \right)\Phi {{\left( X \right)}^{-\frac{1}{2}}} \right)\ge 0}\text{ }&\Rightarrow {{\sigma }_{g}}\left( w;\Phi {{\left( X \right)}^{-\frac{1}{2}}}\Phi \left( \mathbb{A} \right)\Phi {{\left( X \right)}^{-\frac{1}{2}}} \right)\geq I, \\
\end{align*}	
which implies, by the second part of Lemma \ref{4.1},
$$\text{ }\Phi {{\left( X \right)}^{-\frac{1}{2}}}{{\sigma }_{g}}\left( w;\Phi \left( \mathbb{A}\right) \right)\Phi {{\left( X \right)}^{-\frac{1}{2}}}\ge I.$$
	That is
	\[{{\sigma }_{g}}\left( w;\Phi \left( \mathbb{A} \right) \right)\ge \Phi \left( X \right)\] 
	which is equivalent to 
	\[{{\sigma }_{g}}\left( w;\Phi \left( \mathbb{A} \right) \right)\ge \Phi \left( {{\sigma }_{g}}\left( w;\mathbb{A} \right) \right).\] 
	This proves the first desired inequality. For the second inequality, noting the RHS of \eqref{6} and the inequality \eqref{03}, respectively, we obtain
	\[\begin{aligned}
	{{\sigma }_{g}}\left( w;\Phi \left( \mathbb{A} \right) \right)&\le \sum\limits_{i=1}^{n}{{{w}_{i}}\Phi \left( {{A}_{i}} \right)}\\ 
	& =\Phi \left( \sum\limits_{i=1}^{n}{{{w}_{i}}{{A}_{i}}} \right) \\ 
	& \le \Phi \left( K\left( h,2 \right){{\sigma }_{g}}\left( w;\mathbb{A} \right) \right) \\ 
	& =K\left( h,2 \right)\Phi \left( {{\sigma }_{g}}\left( w;\mathbb{A} \right) \right).  
	\end{aligned}\]
	This completes the proof of the second inequality. 
\end{proof}
Now we show how the inequalities in  Proposition \ref{18} could be raised to the $p^{\text{th}}$ power.
\begin{theorem}\label{12}
Let $\mathbb{A}=\left( {{A}_{1}},\ldots ,{{A}_{n}} \right)$ be an $n$-tuple of positive definite matrices in $\mathcal{M}_k$ with $mI\le {{A}_{i}}\le MI\left( i=1,\ldots ,n \right)$ for some scalars $0<m<M$ and $w=\left( {{w}_{1}},\ldots ,{{w}_{n}} \right)$ be a weight vector. If $\Phi :{{\mathcal{M}}_{k_1}}\to {{\mathcal{M}}_{k_2}}$ is a unital positive linear map, then for any $p\ge 2,$
\begin{equation}\label{11}
\Phi {{\left( \sum\limits_{i=1}^{n}{{{w}_{i}}{{M}^{\frac{{{m}^{-1}}I-A_{i}^{-1}}{{{m}^{-1}}-{{M}^{-1}}}}}{{m}^{\frac{A_{i}^{-1}-{{M}^{-1}I}}{{{m}^{-1}}-{{M}^{-1}}}}}} \right)}^{p}}\le {{\left( \frac{{{\left( m+M \right)}^{2}}}{{{4}^{\frac{2}{p}}}{{M}}{{m}}} \right)}^{p}}\Phi {{\left( {{\sigma}_{g}}\left( w;\mathbb{A} \right) \right)}^{p}},
\end{equation}
and

\begin{equation}\label{112nd}
\Phi {{\left( {{\sigma}_{g}}\left( w;\mathbb{A} \right) \right)}^{p}}\le {{\left( \frac{{{\left( m+M \right)}^{2}}}{{{4}^{\frac{2}{p}}}{{M}}{{m}}} \right)}^{p}}\Phi {{\left( \sum\limits_{i=1}^{n}{{{w}_{i}}{{M}^{\frac{{{m}^{-1}}I-A_{i}^{-1}}{{{m}^{-1}}-{{M}^{-1}}}}}{{m}^{\frac{A_{i}^{-1}-{{M}^{-1}I}}{{{m}^{-1}}-{{M}^{-1}}}}}} \right)}^{p}}.
\end{equation}

\end{theorem}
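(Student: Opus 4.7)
Write $T:=\sum_{i=1}^{n}w_i M^{\frac{m^{-1}I-A_i^{-1}}{m^{-1}-M^{-1}}}m^{\frac{A_i^{-1}-M^{-1}I}{m^{-1}-M^{-1}}}$ for the log-convex interpolant appearing in Proposition \ref{18}. The plan is to push the two-sided sandwich of Proposition \ref{18} through the unital positive linear map $\Phi$ and then to raise the resulting operator inequalities to the $p$-th power via a Lin-type squared Kantorovich inequality.

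I would begin by recording what Proposition \ref{18} provides, namely
\[
\sigma_{g}(w;\mathbb{A})\le T\le K(h,2)\,\sigma_{g}(w;\mathbb{A}),
\]
and by verifying the spectral bounds $mI\le T,\sigma_{g}(w;\mathbb{A})\le MI$: for $T$, each summand is of the scalar form $M^{\alpha_{i}}m^{1-\alpha_{i}}$ with $\alpha_{i}$ a matrix whose spectrum lies in $[0,1]$ (since $A_{i}^{-1}\in[M^{-1}I,m^{-1}I]$), so functional calculus keeps each summand in $[mI,MI]$; for $\sigma_{g}(w;\mathbb{A})$ the bounds come directly from \eqref{6}. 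Applying the unital positive linear map $\Phi$ preserves both the sandwich and the spectral window, so the problem reduces to the following: given positive operators $X:=\Phi(\sigma_{g}(w;\mathbb{A}))$ and $Y:=\Phi(T)$ with $mI\le X,Y\le MI$ and $X\le Y\le K(h,2)X$, deduce $Y^{p}\le\bigl(\tfrac{(m+M)^{2}}{4^{2/p}Mm}\bigr)^{p}X^{p}$ and $X^{p}\le\bigl(\tfrac{(m+M)^{2}}{4^{2/p}Mm}\bigr)^{p}Y^{p}$ for all $p\ge 2$.

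This $p$-th power upgrade is the Lin-type squared Kantorovich inequality (the $p\ge 2$ version of Lin's result $\Phi(A^{-1})^{2}\le K(h,2)^{2}\Phi(A)^{-2}$, available in the Mond--Pe\v{c}ari\'c framework of \cite{micic}), and it is the principal obstacle. The difficulty is that $X\mapsto X^{p}$ is not operator monotone for $p>1$, so the operator inequality $Y\le K(h,2)X$ alone does not imply $Y^{p}\le K(h,2)^{p}X^{p}$ as an operator inequality; the extra Kantorovich factor $\frac{(m+M)^{2}}{4^{2/p}Mm}$ is the compensation this non-monotonicity forces, and its appearance is only possible because $X$ and $Y$ share the common spectral window $[mI,MI]$ verified above. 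Feeding this squared Kantorovich inequality with the pair $(X,Y)$ in the appropriate direction yields \eqref{11}, and with the roles reversed it yields \eqref{112nd}.
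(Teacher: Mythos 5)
There is a genuine gap at the heart of your plan: the ``$p$-th power upgrade'' you invoke does not exist in the generality you need. You reduce the theorem to the claim that whenever $mI\le X,Y\le MI$ and $X\le Y\le K(h,2)X$, one has $Y^{p}\le\bigl(\tfrac{(m+M)^{2}}{4^{2/p}Mm}\bigr)^{p}X^{p}$. But every Lin-type squaring result (including the $p\ge 2$ extensions; note these postdate \cite{micic}, which does not contain them) rests on an \emph{additive} Kantorovich relation of the form $Y+MmX^{-1}\le (M+m)I$, not on the multiplicative order relation $Y\le K(h,2)X$ together with spectral bounds. The additive relation is strictly stronger and is \emph{not} implied by your hypotheses: already for scalars, $X=m$ and $Y=K(h,2)\,m=\tfrac{(M+m)^{2}}{4M}$ satisfy $mI\le X,Y\le MI$ and $X\le Y\le K(h,2)X$, yet $Y+MmX^{-1}=\tfrac{(M+m)^{2}}{4M}+M>M+m$ whenever $M\ne m$. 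Without the additive relation your lemma is unsupported; indeed for $p=2$ it rescales (via $Z=Y/K(h,2)$) to ``$Z\le X$ implies $Z^{2}\le X^{2}$ up to spectral constraints,'' which is exactly the non-monotonicity obstruction you yourself flag, and the known order-preserving substitutes (Furuta's Kantorovich-type theorem) only give $Y^{2}\le K(h,2)^{3}X^{2}$ here, worse than the asserted $K(h,2)^{2}X^{2}$.

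What the paper actually proves, and what your outline omits entirely, is the additive inequality tailored to this pair: from the scalar AM--GM bound \eqref{9}, $t+mM\,m^{\frac{t-M}{M-m}}M^{\frac{m-t}{M-m}}\le M+m$ on $[m,M]$, one gets by functional calculus
\begin{equation*}
\sum_{i=1}^{n}w_iA_i^{-1}+M^{-1}m^{-1}\sum_{i=1}^{n}w_iM^{\frac{m^{-1}I-A_i^{-1}}{m^{-1}-M^{-1}}}m^{\frac{A_i^{-1}-M^{-1}I}{m^{-1}-M^{-1}}}\le (M^{-1}+m^{-1})I,
\end{equation*}
which survives $\Phi$ as \eqref{14}; combined with $\sigma_g(w;\mathbb{A})^{-1}\le\sum_iw_iA_i^{-1}$ from \eqref{6} (and Choi's inequality), this is what feeds the Bhatia--Kittaneh arithmetic--geometric norm inequality and the Ando--Zhan inequality $\|(A+B)^{p/2}\|^{2}\ge\|A^{p/2}+B^{p/2}\|^{2}$ to produce the norm bound $\|\Phi(T)^{p/2}\Phi(\sigma_g(w;\mathbb{A}))^{-p/2}\|\le\tfrac{(M^{-1}+m^{-1})^{p}}{4M^{-p/2}m^{-p/2}}$, equivalent to \eqref{11}. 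Your observation that the sandwich of Proposition \ref{18} passes through $\Phi$ is correct but insufficient: the specific pairing of the geometric interpolant with $\sum_iw_iA_i^{-1}$ in an additive inequality is the indispensable ingredient, and discarding it in favor of the order relation alone loses exactly the information that makes the constant $4^{p-2}K(h,2)^{p}$ attainable.
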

\begin{proof}
First notice that for $t\in \left[ m,M \right]$,
\begin{equation}\label{9}
t+mM{{m}^{\frac{t-M}{M-m}}}{{M}^{\frac{m-t}{M-m}}}=t+{{m}^{\frac{t-m}{M-m}}}{{M}^{\frac{M-t}{M-m}}}\le M+m,
\end{equation}
where we used the weighted arithmetic-geometric mean inequality. On account of \eqref{9}, we infer that
\begin{equation*}
		A_{i}^{-1}+{{M}^{-1}}{{m}^{-1}}{{M}^{\frac{{{m}^{-1}}I-A_{i}^{-1}}{{{m}^{-1}}-{{M}^{-1}}}}}{{m}^{\frac{A_{i}^{-1}-{{M}^{-1}I}}{{{m}^{-1}}-{{M}^{-1}}}}}\le ({{M}^{-1}}+{{m}^{-1}})I
		\end{equation*}
		whenever $mI\le {{A}_{i}}\le MI$ $\left( i=1,\ldots ,n \right)$. Multiplying the above inequality by ${{w}_{i}}$ and summing for all $i=1,\ldots ,n$ with $\sum\nolimits_{i=1}^{n}{{{w}_{i}}}=1$, we have
		\begin{equation}\label{10}
		\sum\limits_{i=1}^{n}{{{w}_{i}}A_{i}^{-1}}+{{M}^{-1}}{{m}^{-1}}\sum\limits_{i=1}^{n}{{{w}_{i}}{{M}^{\frac{{{m}^{-1}}I-A_{i}^{-1}}{{{m}^{-1}}-{{M}^{-1}}}}}{{m}^{\frac{A_{i}^{-1}-{{M}^{-1}I}}{{{m}^{-1}}-{{M}^{-1}}}}}}\le ({{M}^{-1}}+{{m}^{-1}})I.
		\end{equation}
		Then \eqref{10} implies
		\begin{equation}\label{14}
		\Phi \left( \sum\limits_{i=1}^{n}{{{w}_{i}}A_{i}^{-1}} \right)+{{M}^{-1}}{{m}^{-1}}\Phi \left( \sum\limits_{i=1}^{n}{{{w}_{i}}{{M}^{\frac{{{m}^{-1}}I-A_{i}^{-1}}{{{m}^{-1}}-{{M}^{-1}}}}}{{m}^{\frac{A_{i}^{-1}-{{M}^{-1}I}}{{{m}^{-1}}-{{M}^{-1}}}}}} \right)\le ({{M}^{-1}}+{{m}^{-1}})I
		\end{equation}
where $mI\le {{A}_{i}}\le MI\left( i=1,\ldots ,n \right)$. Now, we can write
\[\begin{aligned}
& {{M}^{-\frac{p}{2}}}{{m}^{-\frac{p}{2}}}\left\| \Phi {{\left( \sum\limits_{i=1}^{n}{{{w}_{i}}{{M}^{\frac{{{m}^{-1}}I-A_{i}^{-1}}{{{m}^{-1}}-{{M}^{-1}}}}}{{m}^{\frac{A_{i}^{-1}-{{M}^{-1}I}}{{{m}^{-1}}-{{M}^{-1}}}}}} \right)}^{\frac{p}{2}}}\Phi {{\left( {{\sigma}_{g}}\left( w;\mathbb{A} \right) \right)}^{-\frac{p}{2}}} \right\| \\ 
& \le \frac{1}{4}{{\left\| {{M}^{-\frac{p}{2}}}{{m}^{-\frac{p}{2}}}\Phi {{\left( \sum\limits_{i=1}^{n}{{{w}_{i}}{{M}^{\frac{{{m}^{-1}}I-A_{i}^{-1}}{{{m}^{-1}}-{{M}^{-1}}}}}{{m}^{\frac{A_{i}^{-1}-{{M}^{-1}I}}{{{m}^{-1}}-{{M}^{-1}}}}}} \right)}^{\frac{p}{2}}}+\Phi {{\left( {{\sigma}_{g}}\left( w;\mathbb{A} \right) \right)}^{-\frac{p}{2}}} \right\|}^{2}} \quad \text{(by \cite[Theorem 1]{6})}\\ 
& \le \frac{1}{4}{{\left\| {{\left( {{M}^{-1}}{{m}^{-1}}\Phi \left( \sum\limits_{i=1}^{n}{{{w}_{i}}{{M}^{\frac{{{m}^{-1}}I-A_{i}^{-1}}{{{m}^{-1}}-{{M}^{-1}}}}}{{m}^{\frac{A_{i}^{-1}-{{M}^{-1}I}}{{{m}^{-1}}-{{M}^{-1}}}}}} \right)+\Phi {{\left( {{\sigma}_{g}}\left( w;\mathbb{A} \right) \right)}^{-1}} \right)}^{\frac{p}{2}}} \right\|}^{2}} \quad \text{(by \cite[Corollary 3]{7})}\\ 
& =\frac{1}{4}{{\left\| {{M}^{-1}}{{m}^{-1}}\Phi \left( \sum\limits_{i=1}^{n}{{{w}_{i}}{{M}^{\frac{{{m}^{-1}}I-A_{i}^{-1}}{{{m}^{-1}}-{{M}^{-1}}}}}{{m}^{\frac{A_{i}^{-1}-{{M}^{-1}I}}{{{m}^{-1}}-{{M}^{-1}}}}}} \right)+\Phi {{\left( {{\sigma}_{g}}\left( w;\mathbb{A} \right) \right)}^{-1}} \right\|}^{p}} \\ 
& \le \frac{1}{4}{{\left\| {{M}^{-1}}{{m}^{-1}}\Phi \left( \sum\limits_{i=1}^{n}{{{w}_{i}}{{M}^{\frac{{{m}^{-1}}I-A_{i}^{-1}}{{{m}^{-1}}-{{M}^{-1}}}}}{{m}^{\frac{A_{i}^{-1}-{{M}^{-1}I}}{{{m}^{-1}}-{{M}^{-1}}}}}} \right)+\Phi \left( \sum\limits_{i=1}^{n}{{{w}_{i}}A_{i}^{-1}} \right) \right\|}^{p}} \quad \text{(by \eqref{6})}\\ 
& \le \frac{{{\left( {{M}^{-1}}+{{m}^{-1}} \right)}^{p}}}{4} \quad \text{(by \eqref{14})}.\\ 
\end{aligned}\]
Thus, we have shown
\[\left\| \Phi {{\left( \sum\limits_{i=1}^{n}{{{w}_{i}}{{M}^{\frac{{{m}^{-1}}I-A_{i}^{-1}}{{{m}^{-1}}-{{M}^{-1}}}}}{{m}^{\frac{A_{i}^{-1}-{{M}^{-1}I}}{{{m}^{-1}}-{{M}^{-1}}}}}} \right)}^{\frac{p}{2}}}\Phi {{\left( {{\sigma}_{g}}\left( w;\mathbb{A} \right) \right)}^{-\frac{p}{2}}} \right\|\le \frac{{{\left( {{M}^{-1}}+{{m}^{-1}} \right)}^{p}}}{4{{M}^{-\frac{p}{2}}}{{m}^{-\frac{p}{2}}}},\]
which is equivalent to the desired inequality \eqref{11}. \\
Now to prove \eqref{112nd}, we proceed similarly noting that for $t\in[m,M]$, we have
\begin{align}
t+mM\;(M^{-1})^{\frac{m^{-1}-t^{-1}}{m^{-1}-M^{-1}}}(m^{-1})^{\frac{t^{-1}-M^{-1}}{m^{-1}-M^{-1}}}&\leq t+mM\;t^{-1}\leq m+M.\label{needed_square_1}
\end{align}
Then

\begin{align}
& {{M}^{\frac{p}{2}}}{{m}^{\frac{p}{2}}}\left\| \Phi {{\left( \sum\limits_{i=1}^{n}{{{w}_{i}}{{M}^{\frac{{{m}^{-1}}I-A_{i}^{-1}}{{{m}^{-1}}-{{M}^{-1}}}}}{{m}^{\frac{A_{i}^{-1}-{{M}^{-1}I}}{{{m}^{-1}}-{{M}^{-1}}}}}} \right)}^{-\frac{p}{2}}}\Phi {{\left( {{\sigma}_{g}}\left( w;\mathbb{A} \right) \right)}^{\frac{p}{2}}} \right\| \notag\\ 
& \leq\frac{1}{4}{{\left\| {{M}}{{m}}\Phi \left( \sum\limits_{i=1}^{n}{{{w}_{i}}{{M}^{\frac{{{m}^{-1}}I-A_{i}^{-1}}{{{m}^{-1}}-{{M}^{-1}}}}}{{m}^{\frac{A_{i}^{-1}-{{M}^{-1}I}}{{{m}^{-1}}-{{M}^{-1}}}}}} \right)^{-1}+\Phi {{\left( {{\sigma}_{g}}\left( w;\mathbb{A} \right) \right)}} \right\|}^{p}}\notag \\ 
& \le \frac{1}{4}{{\left\| {{M}}{{m}}\Phi \left( \sum\limits_{i=1}^{n}{{{w}_{i}}{{(M^{-1})}^{\frac{{{m}^{-1}}I-A_{i}^{-1}}{{{m}^{-1}}-{{M}^{-1}}}}}{{(m^{-1})}^{\frac{A_{i}^{-1}-{{M}^{-1}I}}{{{m}^{-1}}-{{M}^{-1}}}}}} \right)+\Phi \left( \sum\limits_{i=1}^{n}{{{w}_{i}}A_{i}} \right) \right\|}^{p}}\label{needed_square_2}\\ 
& \notag \le \frac{{{\left( {{M}}+{{m}} \right)}^{p}}}{4},
\end{align}
where we have used  \eqref{6} and the fact that the function $f(t)=t^{-1}$ is operator convex to obtain \eqref{needed_square_2}, then \eqref{needed_square_1} to obtain the last inequality.

\end{proof}
As a complementary result to Theorem \ref{12} we have:
\begin{proposition}\label{prop_second_square}
	Let all assumptions of Theorem \ref{12} hold. Then
\begin{equation}\label{21}
\Phi {{\left( \sum\limits_{i=1}^{n}{{{w}_{i}}{{M}^{\frac{{{m}^{-1}}I-A_{i}^{-1}}{{{m}^{-1}}-{{M}^{-1}}}}}{{m}^{\frac{A_{i}^{-1}-{{M}^{-1}I}}{{{m}^{-1}}-{{M}^{-1}}}}}} \right)}^{p}}\le \left( \frac{\left( m+M \right)^{2}}{4^{\frac{2}{p}}{mM}} \right)^{p}{{\sigma}_{g}}{{\left( w;\Phi \left( \mathbb{A} \right) \right)}^{p}},
\end{equation}
and

\begin{equation}\label{212nd}
{{\sigma}_{g}}{{\left( w;\Phi \left( \mathbb{A} \right) \right)}^{p}}\le \left( \frac{\left( m+M \right)^{2}}{4^{\frac{2}{p}}{mM}} \right)^{p}\;\Phi {{\left( \sum\limits_{i=1}^{n}{{{w}_{i}}{{M}^{\frac{{{m}^{-1}}I-A_{i}^{-1}}{{{m}^{-1}}-{{M}^{-1}}}}}{{m}^{\frac{A_{i}^{-1}-{{M}^{-1}I}}{{{m}^{-1}}-{{M}^{-1}}}}}} \right)}^{p}}.
\end{equation}

\end{proposition}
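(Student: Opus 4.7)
The plan is to run the two chains of operator/norm inequalities from the proof of Theorem \ref{12} essentially verbatim, with only one substitution: wherever the original proof invokes a bound involving $\Phi(\sigma_g(w;\mathbb{A}))$, I substitute a bound involving $\sigma_g(w;\Phi(\mathbb{A}))$ obtained by applying \eqref{6} directly to the tuple $\Phi(\mathbb{A})=(\Phi(A_1),\dots,\Phi(A_n))$. Throughout the argument, I write $G=\sum_{i=1}^{n}w_i M^{\frac{m^{-1}I-A_i^{-1}}{m^{-1}-M^{-1}}}m^{\frac{A_i^{-1}-M^{-1}I}{m^{-1}-M^{-1}}}$ to shorten the display.

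For \eqref{21} I set $X=\Phi(G)$ and $Y=\sigma_g(w;\Phi(\mathbb{A}))$, and repeat the same three steps used in Theorem \ref{12}: first apply \cite[Theorem 1]{6} to bound $M^{-p/2}m^{-p/2}\|X^{p/2}Y^{-p/2}\|$ by $\tfrac14\|M^{-p/2}m^{-p/2}X^{p/2}+Y^{-p/2}\|^{2}$; then apply \cite[Corollary 3]{7} to collapse this to $\tfrac14\|M^{-1}m^{-1}X+Y^{-1}\|^{p}$. The crucial substitution is $Y^{-1}\le \Phi\bigl(\sum_{i}w_i A_i^{-1}\bigr)$, which follows from the harmonic-mean lower bound of \eqref{6} applied to $\Phi(\mathbb{A})$, namely $\sigma_g(w;\Phi(\mathbb{A}))^{-1}\le\sum_i w_i \Phi(A_i)^{-1}$, combined with operator convexity of $t\mapsto t^{-1}$ (which gives $\Phi(A_i)^{-1}\le\Phi(A_i^{-1})$). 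After this replacement, \eqref{10} terminates the chain at $\tfrac{(M^{-1}+m^{-1})^{p}}{4}$, and converting the resulting norm estimate into an operator inequality via $\|X^{p/2}Y^{-p/2}\|^{2}\le C^{2}\iff X^{p}\le C^{2}Y^{p}$ yields \eqref{21}.

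For \eqref{212nd} I swap the roles, setting $X=\sigma_g(w;\Phi(\mathbb{A}))$ and $Y=\Phi(G)$, and run the analogue of the second chain in the proof of Theorem \ref{12}: the Lin--Choi reductions deliver $M^{p/2}m^{p/2}\|X^{p/2}Y^{-p/2}\|\le \tfrac14\|MmY^{-1}+X\|^{p}$. This time I use the arithmetic-mean upper bound of \eqref{6} applied to $\Phi(\mathbb{A})$, giving $X\le \Phi\bigl(\sum_i w_i A_i\bigr)$, together with two applications of operator convexity of $t\mapsto t^{-1}$ to obtain $Y^{-1}=\Phi(G)^{-1}\le \Phi\bigl(G^{-1}\bigr)\le \Phi\bigl(\sum_i w_i G_i^{-1}\bigr)$. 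The scalar inequality \eqref{needed_square_1} then closes the chain exactly as in the proof of \eqref{112nd}, and the norm-to-operator conversion produces \eqref{212nd}. The only real checkpoint, and the natural place where one could slip, is ensuring that the Jensen-type moves $\Phi(A)^{-1}\le\Phi(A^{-1})$ are invoked in the correct direction so as not to reverse any of the operator inequalities; once this bookkeeping is right, nothing new beyond Theorem \ref{12} is required.
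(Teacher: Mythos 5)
Your proposal is correct and follows essentially the same route as the paper's proof: the Bhatia--Kittaneh and Ando--Zhan norm reductions, then the harmonic (resp.\ arithmetic) bound of \eqref{6} applied to the tuple $\Phi(\mathbb{A})$ combined with Choi's inequality $\Phi(A)^{-1}\le\Phi(A^{-1})$, closing the chain with \eqref{14} (resp.\ \eqref{needed_square_1}). The paper writes out only the first chain and declares that \eqref{212nd} ``follows similarly''; your sketch of the second chain supplies exactly the intended details.
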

\begin{proof}
We prove the first inequality. Notice that
\[\begin{aligned}
& {{M}^{-\frac{p}{2}}}{{m}^{-\frac{p}{2}}}\left\| \Phi {{\left( \sum\limits_{i=1}^{n}{{{w}_{i}}{{M}^{\frac{{{m}^{-1}}I-A_{i}^{-1}}{{{m}^{-1}}-{{M}^{-1}}}}}{{m}^{\frac{A_{i}^{-1}-{{M}^{-1}I}}{{{m}^{-1}}-{{M}^{-1}}}}}} \right)}^{\frac{p}{2}}}{{\sigma}_{g}}{{\left( w;\Phi \left( \mathbb{A} \right) \right)}^{-\frac{p}{2}}} \right\| \\ 
& \le \frac{1}{4}{{\left\| {{M}^{-\frac{p}{2}}}{{m}^{-\frac{p}{2}}}\Phi {{\left( \sum\limits_{i=1}^{n}{{{w}_{i}}{{M}^{\frac{{{m}^{-1}}I-A_{i}^{-1}}{{{m}^{-1}}-{{M}^{-1}}}}}{{m}^{\frac{A_{i}^{-1}-{{M}^{-1}I}}{{{m}^{-1}}-{{M}^{-1}}}}}} \right)}^{\frac{p}{2}}}+{{\sigma}_{g}}{{\left( w;\Phi \left( \mathbb{A} \right) \right)}^{-\frac{p}{2}}} \right\|}^{2}} \\ 
& \le \frac{1}{4}{{\left\| {{\left( {{M}^{-1}}{{m}^{-1}}\Phi \left( \sum\limits_{i=1}^{n}{{{w}_{i}}{{M}^{\frac{{{m}^{-1}}I-A_{i}^{-1}}{{{m}^{-1}}-{{M}^{-1}}}}}{{m}^{\frac{A_{i}^{-1}-{{M}^{-1}I}}{{{m}^{-1}}-{{M}^{-1}}}}}} \right)+{{\sigma}_{g}}{{\left( w;\Phi \left( \mathbb{A} \right) \right)}^{-1}} \right)}^{\frac{p}{2}}} \right\|}^{2}} \\ 
& =\frac{1}{4}{{\left\| {{M}^{-1}}{{m}^{-1}}\Phi \left( \sum\limits_{i=1}^{n}{{{w}_{i}}{{M}^{\frac{{{m}^{-1}}I-A_{i}^{-1}}{{{m}^{-1}}-{{M}^{-1}}}}}{{m}^{\frac{A_{i}^{-1}-{{M}^{-1}I}}{{{m}^{-1}}-{{M}^{-1}}}}}} \right)+{{\sigma}_{g}}{{\left( w;\Phi \left( \mathbb{A} \right) \right)}^{-1}} \right\|}^{p}} \\ 
& \le \frac{1}{4}{{\left\| {{M}^{-1}}{{m}^{-1}}\Phi \left( \sum\limits_{i=1}^{n}{{{w}_{i}}{{M}^{\frac{{{m}^{-1}}I-A_{i}^{-1}}{{{m}^{-1}}-{{M}^{-1}}}}}{{m}^{\frac{A_{i}^{-1}-{{M}^{-1}I}}{{{m}^{-1}}-{{M}^{-1}}}}}} \right)+\sum\limits_{i=1}^{n}{{{w}_{i}}\Phi {{\left( {{A}_{i}} \right)}^{-1}}} \right\|}^{p}} \quad \text{(by LHS of \eqref{3})}\\ 
& \le \frac{1}{4}{{\left\| {{M}^{-1}}{{m}^{-1}}\Phi \left( \sum\limits_{i=1}^{n}{{{w}_{i}}{{M}^{\frac{{{m}^{-1}}I-A_{i}^{-1}}{{{m}^{-1}}-{{M}^{-1}}}}}{{m}^{\frac{A_{i}^{-1}-{{M}^{-1}I}}{{{m}^{-1}}-{{M}^{-1}}}}}} \right)+\Phi \left( \sum\limits_{i=1}^{n}{{{w}_{i}}A_{i}^{-1}} \right) \right\|}^{p}} \quad \text{(by \cite[Theorem 2.3.6]{8})}\\ 
& \le \frac{{{\left( {{M}^{-1}}+{{m}^{-1}} \right)}^{p}}}{4} \quad \text{(by \eqref{14})}.\\ 
\end{aligned}\]
Then the desired inequality follows immediately. The second inequality follows similarly.
\end{proof}

Following the same steps as Theorem \ref{12} and Proposition \ref{prop_second_square}, one can show that the inequalities in \ref{thm_first_linear_map} can be raised to the $p^{\text{th}}$ power.
\begin{corollary}
Let $\mathbb{A}=\left( {{A}_{1}},\ldots ,{{A}_{n}} \right)$ be an $n$-tuple of positive definite matrices in $\mathcal{M}_k$  with $mI\le {{A}_{i}}\le MI\left( i=1,\ldots ,n \right)$ for some scalars $0<m<M$ and $w=\left( {{w}_{1}},\ldots ,{{w}_{n}} \right)$ be a weight vector. Then for any unital positive linear map $\Phi$ and $p\geq 2,$
	$$\sigma_g(w;\Phi(\mathbb{A}))^p\leq \left(\frac{(M+m)^2}{4^{\frac{2}{p}}mM}\right)^{p}\Phi(\sigma_g(w;\mathbb{A}))^p.$$
\end{corollary}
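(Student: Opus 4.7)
The plan is to mirror the computation of Proposition \ref{prop_second_square} (and the second part of Theorem \ref{12}) with the two operators inside the norm swapped, so that the bound ends up on $\sigma_g(w;\Phi(\mathbb{A}))^p$ rather than on $\Phi$ of the weighted logarithmic-mean expression. Set $X=\sigma_g(w;\Phi(\mathbb{A}))$ and $Y=\Phi(\sigma_g(w;\mathbb{A}))$. The standard squaring trick reduces the desired operator inequality to
\[
\bigl\|X^{p/2}\,Y^{-p/2}\bigr\|^{2}\le \left(\frac{(M+m)^{2}}{4^{2/p}mM}\right)^{p},
\]
from which $X^{p}\le \|X^{p/2}Y^{-p/2}\|^{2}\,Y^{p}$ gives the conclusion.

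To reach that estimate, I would apply in succession the same two ingredients used in Theorem \ref{12}: the Bhatia--Kittaneh-type inequality $\|AB\|\le\tfrac14\|A+B\|^{2}$ from \cite[Theorem~1]{6}, with $A=(mM)^{-p/2}X^{p/2}$ and $B=Y^{-p/2}$, followed by the operator estimate $\|A^{p/2}+B^{p/2}\|\le \|(A+B)^{p/2}\|$ from \cite[Corollary~3]{7} (valid for $p\ge 2$). This gives
\[
(mM)^{-p/2}\bigl\|X^{p/2}Y^{-p/2}\bigr\|\le \tfrac14\bigl\|(mM)^{-1}X+Y^{-1}\bigr\|^{p}.
\]

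The remaining task, and the only place where genuine work is needed, is to bound $(mM)^{-1}X+Y^{-1}$ as an operator. For the first summand I would use the right-hand side of \eqref{6} together with the positive linearity of $\Phi$, obtaining $X\le \Phi(\sum_{i}w_{i}A_{i})$. For the second summand I would combine operator convexity of $t\mapsto t^{-1}$ (Choi's inequality for unital positive maps) with the left-hand side of \eqref{6}: $Y^{-1}=\Phi(\sigma_g(w;\mathbb{A}))^{-1}\le \Phi\bigl(\sigma_g(w;\mathbb{A})^{-1}\bigr)\le \Phi(\sum_{i}w_{i}A_{i}^{-1})$. Summing gives
\[
(mM)^{-1}X+Y^{-1}\le \Phi\!\left(\sum_{i=1}^{n}w_{i}\bigl[(mM)^{-1}A_{i}+A_{i}^{-1}\bigr]\right).
\]
The convex scalar function $\varphi(t)=(mM)^{-1}t+t^{-1}$ satisfies $\varphi(m)=\varphi(M)=m^{-1}+M^{-1}$, hence $\varphi(t)\le m^{-1}+M^{-1}$ on $[m,M]$; functional calculus and unitality of $\Phi$ then produce $(mM)^{-1}X+Y^{-1}\le (m^{-1}+M^{-1})I$, so $\|(mM)^{-1}X+Y^{-1}\|\le (M+m)/(mM)$.

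Plugging this scalar bound back into the squaring chain yields $\|X^{p/2}Y^{-p/2}\|\le \tfrac{(M+m)^{p}}{4(mM)^{p/2}}$, and squaring produces exactly $(M+m)^{2p}/(16(mM)^{p})=\bigl((M+m)^{2}/(4^{2/p}mM)\bigr)^{p}$, as required. The only delicate step is the one-two punch on $Y^{-1}$: Choi's inequality has to be applied to the operator $\sigma_g(w;\mathbb{A})$ before invoking \eqref{6}, because $\Phi$ does not in general commute with the inverse; once that is handled, the remaining manipulations are entirely routine and track the templates in Theorem \ref{12} and Proposition \ref{prop_second_square}.
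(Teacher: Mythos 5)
Your argument is correct and is exactly the route the paper intends: the corollary is stated without an explicit proof, with the remark that it follows the same steps as Theorem \ref{12} and Proposition \ref{prop_second_square}, and your chain (the squaring reduction $X^{p}\le\|X^{p/2}Y^{-p/2}\|^{2}Y^{p}$, then \cite[Theorem 1]{6} and \cite[Corollary 3]{7}, then the scalar bound $(mM)^{-1}t+t^{-1}\le m^{-1}+M^{-1}$ on $[m,M]$) is precisely that template adapted to $X=\sigma_g(w;\Phi(\mathbb{A}))$ and $Y=\Phi(\sigma_g(w;\mathbb{A}))$. Your explicit handling of $Y^{-1}$ via Choi's inequality before invoking \eqref{6} is the right way to fill in the one step the paper leaves implicit.
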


Related to positive linear maps, the inequality \eqref{bourin_geo_ineq} was shown  in \cite{bourin}  as a main tool to prove a reversed version of the inequality $\Phi(A\sharp B)\leq \Phi(A)\sharp\Phi(B).$ Our next result is the natural extension of \eqref{bourin_geo_ineq} to the context of the solution of the GKE. 
\begin{theorem}
Let $\mathbb{A}=(A_1,\cdots,A_n)$ be an $n$-tuple of positive definite matrices in $\mathcal{M}_k$ and let $w=(w_1,\cdots,w_n)$ be a weight vector. Then,  for any  $x\in\mathbb{C}^k$,
 \begin{align*}
\left<\sigma_g(w;\mathbb{A})x,x\right>\leq \sigma_g\left(w;\left<\mathbb{A}x,x\right>\right).
\end{align*}
\end{theorem}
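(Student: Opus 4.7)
The plan is to derive this inequality as a specialization of the first inequality in Theorem \ref{thm_first_linear_map}, applied to the vector-state map. For a nonzero $x \in \mathbb{C}^k$, define $\Phi_x : \mathcal{M}_k \to \mathbb{C} \cong \mathcal{M}_1$ by
\[
\Phi_x(A) = \frac{\langle A x, x\rangle}{\|x\|^2}.
\]
This is a unital positive linear map, and its image on the tuple $\mathbb{A}$ is the scalar tuple $\Phi_x(\mathbb{A}) = \|x\|^{-2}\langle \mathbb{A} x, x\rangle$.

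Applying the first inequality of Theorem \ref{thm_first_linear_map} to $\Phi_x$ yields
\[
\frac{\langle \sigma_g(w;\mathbb{A}) x, x\rangle}{\|x\|^2} = \Phi_x(\sigma_g(w;\mathbb{A})) \leq \sigma_g\bigl(w;\, \|x\|^{-2}\langle \mathbb{A} x, x\rangle\bigr).
\]
To remove the factor $\|x\|^{-2}$ from the right-hand argument, I would invoke the homogeneity
\[
\sigma_g(w;\, c\,\mathbb{B}) = c\,\sigma_g(w;\mathbb{B}) \qquad (c > 0),
\]
which follows directly from Lemma \ref{4.1}(ii) by taking $X = \sqrt{c}\,I$. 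Applied with $c = \|x\|^{-2}$, this gives $\sigma_g(w;\|x\|^{-2}\langle \mathbb{A} x, x\rangle) = \|x\|^{-2}\sigma_g(w;\langle \mathbb{A} x, x\rangle)$. Multiplying through by $\|x\|^2$ yields the claim, and the case $x = 0$ is trivial.

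The subtlety to check is that Theorem \ref{thm_first_linear_map} applies with a one-dimensional codomain, and that its first inequality holds without the uniform spectral bounds $mI \leq A_i \leq MI$ stated in the hypothesis. Both points are straightforward: the proof of that first inequality depends only on Ando's operator inequality for the binary mean $X \sigma_g A_i$ together with Lemma \ref{4.1}(i)--(ii), neither of which requires the bounds or any particular dimension of the target. If one prefers a fully self-contained derivation, the same idea can be run directly via L\"owner's theorem (operator monotone functions on $(0,\infty)$ are operator concave) together with the operator Jensen inequality $g(\langle A y, y\rangle) \geq \langle g(A) y, y\rangle$, applied with the unit vector $y = X^{1/2} x / \|X^{1/2} x\|$ where $X = \sigma_g(w;\mathbb{A})$. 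This reduces the defining Karcher-type equation to the scalar inequality $\sum_i w_i\, g(\langle A_i x, x\rangle / \langle X x, x\rangle) \geq 0$, from which the strict monotonicity of the scalar map $t \mapsto \sum_i w_i\, g(\langle A_i x, x\rangle / t)$ finishes the argument. I expect the only real care required to lie in verifying these two generalizations of Theorem \ref{thm_first_linear_map}, or equivalently, in writing down the Jensen step cleanly.
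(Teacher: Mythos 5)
Your proposal is correct, but your primary route differs from the paper's. The paper proves the statement directly from the defining equation: writing $X=\sigma_g(w;\mathbb{A})$, it takes the inner product of $\sum_i w_i X^{1/2}g(X^{-1/2}A_iX^{-1/2})X^{1/2}=0$ with $x$, applies the scalar Jensen inequality $\left<g(A)y,y\right>\leq g(\left<Ay,y\right>)$ at the unit vector $y=X^{1/2}x/\|X^{1/2}x\|$, and then invokes Lemma \ref{4.1}(i)--(ii) in the one-dimensional case to conclude $\left<Xx,x\right>\leq \sigma_g(w;\left<\mathbb{A}x,x\right>)$ --- this is precisely your ``fully self-contained'' fallback. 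Your main route instead specializes the information-monotonicity inequality $\Phi(\sigma_g(w;\mathbb{A}))\leq\sigma_g(w;\Phi(\mathbb{A}))$ of Theorem \ref{thm_first_linear_map} to the vector state $\Phi_x(A)=\left<Ax,x\right>/\|x\|^2$ and then uses the homogeneity $\sigma_g(w;c\,\mathbb{B})=c\,\sigma_g(w;\mathbb{B})$ from Lemma \ref{4.1}(ii) with $X=\sqrt{c}\,I$. Both of your auxiliary checks are sound: the spectral bounds $mI\leq A_i\leq MI$ in Theorem \ref{thm_first_linear_map} are automatic for a finite tuple of positive definite matrices and are not used in its first inequality, and nothing in that proof requires the codomain to have dimension larger than one. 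The trade-off is that your route is shorter and exhibits the result as a genuine corollary of information monotonicity, but it inherits the dependence of Theorem \ref{thm_first_linear_map} on Ando's inequality for the (signed) connection $X\sigma_g A_i=X^{1/2}g(X^{-1/2}A_iX^{-1/2})X^{1/2}$, a nontrivial operator-theoretic input; the paper's direct argument needs only the elementary scalar Jensen inequality for the concave function $g$, which is why the authors present it independently rather than as a consequence of the positive-linear-map theorem.
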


\begin{proof}
If $x=0,$ the result holds trivially, hence we may assume $x\not=0$.
Let $X=\sigma_g(w;\mathbb{A}).$ Then
\[\sum\limits_{i=1}^{n}{{{w}_{i}}g\left( {{X}^{-\frac{1}{2}}}{{A}_{i}}{{X}^{-\frac{1}{2}}} \right)}=0\quad\Rightarrow\quad \sum\limits_{i=1}^{n}{{{w}_{i}}{{X}^{\frac{1}{2}}}g\left( {{X}^{-\frac{1}{2}}}{{A}_{i}}{{X}^{-\frac{1}{2}}} \right)}{{X}^{\frac{1}{2}}}=0.\]
Therefore, if $x\in\mathbb{C}^n$ is any nonzero vector, so that $X x\not=0,$ then
\[\begin{aligned}
 0=&\sum\limits_{i=1}^{n}{{{w}_{i}}\left\langle {{X}^{\frac{1}{2}}}g\left( {{X}^{-\frac{1}{2}}}{{A}_{i}}{{X}^{-\frac{1}{2}}} \right){{X}^{\frac{1}{2}}}x,x \right\rangle } \\ 
& ={{\left\| {{X}^{\frac{1}{2}}}x \right\|}^{2}}\sum\limits_{i=1}^{n}{{{w}_{i}}\left\langle g\left( {{X}^{-\frac{1}{2}}}{{A}_{i}}{{X}^{-\frac{1}{2}}} \right)\frac{{{X}^{\frac{1}{2}}}x}{\left\| {{X}^{\frac{1}{2}}}x \right\|},\frac{{{X}^{\frac{1}{2}}}x}{\left\| {{X}^{\frac{1}{2}}}x \right\|} \right\rangle } \\ 
& \le {{\left\| {{X}^{\frac{1}{2}}}x \right\|}^{2}}\sum\limits_{i=1}^{n}{{{w}_{i}}g\left( \left\langle {{X}^{-\frac{1}{2}}}{{A}_{i}}{{X}^{-\frac{1}{2}}}\frac{{{X}^{\frac{1}{2}}}x}{\left\| {{X}^{\frac{1}{2}}}x \right\|},\frac{{{X}^{\frac{1}{2}}}x}{\left\| {{X}^{\frac{1}{2}}}x \right\|} \right\rangle  \right)} \\ 
& ={{\left\| {{X}^{\frac{1}{2}}}x \right\|}^{2}}\sum\limits_{i=1}^{n}{{{w}_{i}}g\left( \frac{\left\langle {{A}_{i}}x,x \right\rangle }{{{\left\| {{X}^{\frac{1}{2}}}x \right\|}^{2}}} \right)}.  
\end{aligned}\]
Applying Lemma \ref{4.1}, we infer that
\begin{align*}
\sigma_g\left(w;\frac{\left<\mathbb{A}x,x\right>}{\left\|X^{\frac{1}{2}}x\right\|^2}\right)\geq 1\quad\Rightarrow\quad \left\|X^{\frac{1}{2}}x\right\|^{-2}\;\sigma_g\left(w;\left<\mathbb{A}x,x\right>\right)\geq 1,
\end{align*}
which implies
\begin{align*}
\left<X^{\frac{1}{2}}x,X^{\frac{1}{2}}x\right>\leq \sigma_g\left(w;\left<\mathbb{A}x,x\right>\right)\quad\Rightarrow\quad \left<Xx,x\right>\leq \sigma_g\left(w;\left<\mathbb{A}x,x\right>\right).
\end{align*}
That is
\begin{align*}
\left<\sigma_g(w;\mathbb{A})x,x\right>\leq \sigma_g\left(w;\left<\mathbb{A}x,x\right>\right).
\end{align*}
\end{proof}

\section{Inequalities for operator monotone functions}
Given an operator monotone function $f:(0,\infty)\to (0,\infty)$, we  discuss the relation between ${{\sigma }_{g}}\left( w;f\left( \mathbb{A} \right) \right)$ and $f\left( {{\sigma }_{g}}\left( w;\mathbb{A} \right) \right)$ in the following theorem. 
\begin{theorem}\label{thm_monotone}
	Let $\mathbb{A}=\left( {{A}_{1}},\ldots ,{{A}_{n}} \right)$ be a $n$-tuple of positive definite matrices in $\mathcal{M}_k$ with $mI\le {{A}_{i}}\le MI\left( i=1,\ldots ,n \right)$ for some scalars $0<m<M$ and $w=\left( {{w}_{1}},\ldots ,{{w}_{n}} \right)$ be a weight vector.
	\begin{itemize}
		\item[(I)] If $f:(0,\infty)\to (0,\infty)$ is an operator monotone function, then
		\begin{equation}\label{16}
		{{\sigma }_{g}}\left( w;f\left( \mathbb{A} \right) \right)\le K\left( h,2 \right)f\left( {{\sigma }_{g}}\left( w;\mathbb{A} \right) \right).
		\end{equation}
		\item[(II)] If $f:(0,\infty)\to (0,\infty)$ is an operator monotone decreasing function, then
		\begin{equation}\label{17}
		f\left( {{\sigma }_{g}}\left( w;\mathbb{A} \right) \right)\le K\left( h,2 \right){{\sigma }_{g}}\left( w;f\left( \mathbb{A} \right) \right).	
		\end{equation}
	\end{itemize}
\end{theorem}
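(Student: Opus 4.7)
The plan is to prove (I) by a four-inequality chain and to obtain (II) by running the same chain on the reciprocal function $\psi = 1/f$ and inverting at the end.

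For (I): Since $f$ is positive and operator monotone on $(0,\infty)$, it is operator concave by L\"owner--Heinz. I would construct the chain
\[
\sigma_g(w;f(\mathbb{A})) \;\le\; \sum_{i=1}^{n} w_i f(A_i) \;\le\; f\!\left(\sum_{i=1}^{n} w_i A_i\right) \;\le\; f\!\left(K(h,2)\,\sigma_g(w;\mathbb{A})\right) \;\le\; K(h,2)\, f(\sigma_g(w;\mathbb{A})),
\]
whose four steps use, in order: the right-hand inequality in \eqref{6} applied to $f(\mathbb{A})$; operator Jensen (concavity) for $f$; operator monotonicity of $f$ combined with \eqref{03}; and the scaling bound $f(\alpha X)\le\alpha f(X)$ valid for $\alpha\ge 1$ and $X>0$.

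For (II): Set $\psi := 1/f$. Because $f$ is positive and operator monotone decreasing on $(0,\infty)$, $\psi$ is positive and operator monotone \emph{increasing} there, and hence operator concave. Repeating the concavity--monotonicity--scaling trio with $\psi$ in place of $f$ gives
\[
\sum_{i=1}^{n} w_i \psi(A_i) \;\le\; \psi\!\left(\sum_{i=1}^{n} w_i A_i\right) \;\le\; \psi\!\left(K(h,2)\,\sigma_g(w;\mathbb{A})\right) \;\le\; K(h,2)\,\psi(\sigma_g(w;\mathbb{A})),
\]
that is, $\sum_{i=1}^{n} w_i f(A_i)^{-1} \le K(h,2)\,f(\sigma_g(w;\mathbb{A}))^{-1}$. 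Taking matrix inverses (which reverses the order on positive definite matrices) yields $f(\sigma_g(w;\mathbb{A})) \le K(h,2)\left(\sum_{i=1}^{n} w_i f(A_i)^{-1}\right)^{-1}$, and the left-hand inequality in \eqref{6} applied to $f(\mathbb{A})$, namely $\left(\sum_{i=1}^{n} w_i f(A_i)^{-1}\right)^{-1}\le \sigma_g(w;f(\mathbb{A}))$, closes the chain.

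The main subtlety is (II): since $f$ itself is operator monotone decreasing, the Jensen step goes the wrong direction for $f$, which forces the detour through $\psi=1/f$ and the final matrix inversion. A secondary technical point, used in both parts, is the scaling bound $f(\alpha X)\le\alpha f(X)$ for $\alpha\ge 1$ and a positive operator concave $f$; it follows from operator concavity of $f$ extended continuously to $[0,\infty)$ by writing $X=\tfrac{1}{\alpha}(\alpha X)+(1-\tfrac{1}{\alpha})\cdot 0$ and using the fact that $f(0^+)\ge 0$, which holds because $f$ is positive and monotone on $(0,\infty)$.
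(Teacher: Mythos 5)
Your proposal is correct and follows essentially the same route as the paper: part (I) is the identical four-step chain (RHS of \eqref{6}, operator concavity, \eqref{03} with monotonicity, and the scaling bound $f(\alpha t)\le\alpha f(t)$), and part (II) is the paper's chain read through the reciprocal, since your Jensen step for $\psi=1/f$ followed by inversion is exactly the inequality $f\left(\sum w_i A_i\right)\le\left(\sum w_i f(A_i)^{-1}\right)^{-1}$ that the paper cites from Ando--Hiai, and your scaling bound for $\psi$ is the paper's $\tfrac{1}{\alpha}f(t)\le f(\alpha t)$. The only difference is that you derive that cited lemma from scratch via operator concavity of $1/f$, which is a harmless (and slightly more self-contained) substitution.
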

\begin{proof}
Notice first that if $f:(0,\infty)\to (0,\infty)$ is operator monotone increasing, it is operator concave \cite{ando}. 
	Compute
	\[\begin{aligned}
	{{\sigma }_{g}}\left( w;f\left( \mathbb{A} \right) \right)&\le \sum\limits_{i=1}^{n}{{{w}_{i}}f\left( {{A}_{i}} \right)} \quad({\text{by}}\;\eqref{6})\\ 
	& \le f\left( \sum\limits_{i=1}^{n}{{{w}_{i}}{{A}_{i}}} \right)\quad (f\;{\text{being\;operator\;concave}}) \\ 
	& \le f\left( K\left( h,2 \right){{\sigma }_{g}}\left( w;\mathbb{A} \right) \right) \quad ({\text{by}}\;\eqref{03})\\ 
	& \le K\left( h,2 \right)f\left( {{\sigma }_{g}}\left( w;\mathbb{A} \right) \right)  
	\end{aligned}\]
	where, to obtain the last inequality, we have used  the fact that if $f\left( t \right)$ is operator monotone and $\alpha \ge 1$, then $f\left( \alpha t \right)\le \alpha f\left( t \right)$. This proves the first inequality.
	
	For the second inequality, notice first that when $f$ is operator monotone decreasing, we have
	$\frac{1}{\alpha }f\left( t \right)\le f\left( \alpha t \right)$ for $\alpha\geq 1.$ Then
	\[\begin{aligned}
	\frac{1}{K\left( h,2 \right)}f\left( {{\sigma }_{g}}\left( w;\mathbb{A} \right) \right)&\le f\left( K\left( h,2 \right){{\sigma }_{g}}\left( w;\mathbb{A} \right) \right) \\ 
	& \le f\left( \sum\limits_{i=1}^{n}{{{w}_{i}}{{A}_{i}}} \right)\quad({\text{by}}\;\eqref{03})\\ 
	& \le {{\left( \sum\limits_{i=1}^{n}{{{w}_{i}}f{{\left( {{A}_{i}} \right)}^{-1}}} \right)}^{-1}} \\ 
	& \le {{\sigma }_{g}}\left( w;f\left( \mathbb{A} \right) \right)  \quad ({\text{by}}\;\eqref{6})
	\end{aligned}\]
	where we have used the fact that $f$ is operator monotone decreasing to obtain the second inequality and \cite[Remark 2.7]{ando} to obtain the third inequality.
\end{proof}
As a counterpart of \eqref{16}, we have the following reversed version. The notation $\nabla_n\mathbb{A}$ will be used for the arithmetic mean $\frac{1}{n}(A_1+\cdots+A_n).$
\begin{proposition}
	Let $\mathbb{A}=\left( {{A}_{1}},\ldots ,{{A}_{n}} \right)$ be a $n$-tuple of positive definite matrices in $\mathcal{M}_k$ with $mI\le {{A}_{i}}\le MI\left( i=1,\ldots ,n \right)$ for some scalars $0<m<M$ and $w=\left( {{w}_{1}},\ldots ,{{w}_{n}} \right)$ a weight vector.  	If $f:(0,\infty)\to (0,\infty)$ is an operator monotone function, then
	\begin{equation}\label{160}
	f\left( {{\sigma }_{g}}\left( w;\mathbb{A} \right) \right)\le K\left( h,2 \right){{\sigma }_{g}}\left( w;f\left( \mathbb{A} \right) \right)+n{{w}_{\max }}\left( f\left( {{\nabla }_{n}}\mathbb{A} \right)-{{\nabla }_{n}}f\left( \mathbb{A} \right) \right),
	\end{equation}
	where $w_{\text{max}}=\max\limits_{1\leq i\leq n}w_i$ and $h=\frac{f(M)}{f(m)}.$
\end{proposition}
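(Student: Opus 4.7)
The plan is to derive the inequality through a three-step chain: operator monotonicity of $f$ starting from the right-hand side of \eqref{6}, a reverse Jensen-type estimate coming from operator concavity, and finally an application of the reversed bound \eqref{03} from Proposition \ref{02} to the tuple $f(\mathbb{A})$.

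The first step is immediate: since $\sigma_g(w;\mathbb{A})\le \sum_{i=1}^n w_i A_i$ by the right-hand side of \eqref{6}, operator monotonicity of $f$ gives $f(\sigma_g(w;\mathbb{A}))\le f\bigl(\sum_{i=1}^n w_i A_i\bigr)$. The third step is also easy once available: because $f$ is operator monotone we have $f(m)I\le f(A_i)\le f(M)I$, so \eqref{03} applied to the tuple $f(\mathbb{A})$ with $h=f(M)/f(m)$ yields $\sum_{i=1}^n w_i f(A_i)\le K(h,2)\,\sigma_g(w;f(\mathbb{A}))$.

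The heart of the argument, and what I expect to be the main obstacle, is the intermediate step of producing a reverse Jensen-type gap bound
\[
f\Bigl(\sum_{i=1}^n w_i A_i\Bigr)-\sum_{i=1}^n w_i f(A_i)\le n w_{\max}\bigl(f(\nabla_n\mathbb{A})-\nabla_n f(\mathbb{A})\bigr).
\]
To prove this I would exploit operator concavity of $f$, which is automatic for operator monotone $f:(0,\infty)\to(0,\infty)$, by constructing the convex combination with weights
\[
p_0=\frac{1}{n w_{\max}},\qquad p_i=\frac{w_{\max}-w_i}{n w_{\max}}\quad (i=1,\dots,n).
\]
These are nonnegative because $w_i\le w_{\max}$ and $n w_{\max}\ge \sum_i w_i=1$, and they sum to $1$ since $\sum_i w_i=1$. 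Jensen's inequality for operator concave functions applied to the points $\sum_j w_j A_j$ and $A_1,\dots,A_n$ with these weights gives
\[
p_0\, f\Bigl(\sum_j w_j A_j\Bigr)+\sum_{i=1}^n p_i f(A_i)\le f\Bigl(p_0\sum_j w_j A_j+\sum_{i=1}^n p_i A_i\Bigr),
\]
and a direct simplification of the argument on the right collapses to $\nabla_n\mathbb{A}$, since the pieces $\sum_j w_j A_j$ and $-\sum_i w_i A_i$ cancel and only $\frac{w_{\max}}{n w_{\max}}\sum_i A_i$ survives. Rearranging this inequality delivers the claimed gap bound.

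Chaining the three steps yields the desired inequality. The only genuinely creative ingredient is the discovery of the weights $(p_0,p_1,\dots,p_n)$ in the middle step; once that convex combination is in hand, the rest is routine bookkeeping using results already established in the paper.
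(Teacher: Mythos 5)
Your proof follows exactly the same three-step chain as the paper's: operator monotonicity of $f$ applied to the right-hand side of \eqref{6}, the Mitroi-type reverse Jensen gap bound, and finally \eqref{03} applied to the tuple $f(\mathbb{A})$ with $h=f(M)/f(m)$. The only difference is that the paper disposes of the middle step by citing \cite{mitroi}, whereas you give a correct self-contained derivation of it via operator concavity and the convex combination with weights $p_0=\frac{1}{nw_{\max}}$ and $p_i=\frac{w_{\max}-w_i}{nw_{\max}}$, whose argument indeed collapses to $\nabla_n\mathbb{A}$; your computation checks out.
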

\begin{proof}
	Noting operator concavity and monotonicity of $f$, we have
	\[\begin{aligned}
	f\left( {{\sigma }_{g}}\left( w;\mathbb{A} \right) \right)&\le f\left( \sum\limits_{i=1}^{n}{{{w}_{i}}{{A}_{i}}} \right) \\ 
	& \le \sum\limits_{i=1}^{n}{{{w}_{i}}f\left( {{A}_{i}} \right)}+n{{w}_{\max }}\left( f\left( {{\nabla }_{n}}\mathbb{A} \right)-{{\nabla }_{n}}f\left( \mathbb{A} \right) \right) \\ 
	& \le K\left( h,2 \right){{\sigma }_{g}}\left( w;f\left( \mathbb{A} \right) \right)+n{{w}_{\max }}\left( f\left( {{\nabla }_{n}}\mathbb{A} \right)-{{\nabla }_{n}}f\left( \mathbb{A} \right) \right)  
	\end{aligned}\]
	where, to obtain the second inequality, we have used an argument similar to that used in \cite{mitroi}.
\end{proof}
Notice that the function $f(t)=t^{-1}$ is operator monotone decreasing. Therefore, inequality \eqref{17} implies
$$\sigma_{g}(w;\mathbb{A})^{-1}\leq K(h,2)\;\sigma_g(w;\mathbb{A}^{-1}).$$ In the following result, we present a counterpart of this inequality. \\
For this, notice that when $g(x)=x-1$, we have $\sigma_g(w;\mathbb{A})=\sum_{i=1}^{n}w_iA_i$, while we obtain $\sigma_g(w,A)=\left(\sum_{i=1}^{n}A_i^{-1}\right)^{-1}$ when $g(x)=1-x^{-1}.$ Therefore, letting $g(x)=x-1$ in \eqref{04}, we have
\begin{equation}\label{needed_1}
\sum_{i=1}^{n}w_iA_i\leq K(h,2)\left(\sum_{i=1}^{n}w_iA_i^{-1}\right)^{-1}.
\end{equation}
\begin{proposition}
	Let all assumptions of Theorem \ref{thm_monotone} hold. Then
	\begin{equation}\label{ineq_inverse}
	\sigma_g(w;\mathbb{A}^{-1})\leq K(h,2)\;\sigma_g(w;A)^{-1}.
	\end{equation}
\end{proposition}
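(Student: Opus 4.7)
The plan is to sandwich $\sigma_g(w;\mathbb{A}^{-1})$ between the weighted harmonic/arithmetic bounds provided by \eqref{6} and then reverse one of these bounds via the Kantorovich constant, using the symmetry of the hypothesis $mI\le A_i\le MI$ under inversion. In particular, no fresh machinery is needed: the inequality is an immediate concatenation of three facts already established in the paper, namely the RHS of \eqref{6}, the inequality \eqref{inverse_needed}, and order-reversal of inversion on positive definite matrices.

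More concretely, since $mI\le A_i\le MI$ forces $M^{-1}I\le A_i^{-1}\le m^{-1}I$ with the same ratio $h=\frac{M}{m}=\frac{m^{-1}}{M^{-1}}$, I would first apply the RHS of \eqref{6} to the tuple $\mathbb{A}^{-1}$ to obtain
$$\sigma_g(w;\mathbb{A}^{-1})\le \sum_{i=1}^{n} w_i A_i^{-1}.$$
Next I would invoke \eqref{inverse_needed} (the Kantorovich bound between the weighted arithmetic and harmonic means) in the form
$$\sum_{i=1}^{n} w_i A_i^{-1}\le K(h,2)\left(\sum_{i=1}^{n} w_i A_i\right)^{-1}.$$
Finally, the RHS of \eqref{6} applied to $\mathbb{A}$ gives $\sigma_g(w;\mathbb{A})\le \sum_{i=1}^{n} w_i A_i$; since both sides are positive definite, inversion reverses the order and yields
$$\left(\sum_{i=1}^{n} w_i A_i\right)^{-1}\le \sigma_g(w;\mathbb{A})^{-1}.$$
Chaining the three displayed estimates produces exactly \eqref{ineq_inverse}.

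There is essentially no obstacle here. The one point worth flagging is that the Kantorovich constant $K(h,2)$ is unchanged when one passes from $\mathbb{A}$ to $\mathbb{A}^{-1}$ precisely because the spectral ratio is preserved, which is what permits the above argument to close cleanly; and the fact that the chain lands on $\sigma_g(w;\mathbb{A})^{-1}$ rather than $\sigma_g(w;\mathbb{A})$ is a direct consequence of the order-reversing nature of the final inversion step.
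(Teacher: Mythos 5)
Your argument is correct and is essentially identical to the paper's own proof: the paper chains the RHS of \eqref{6} applied to $\mathbb{A}^{-1}$, the Kantorovich reverse of the arithmetic--harmonic mean inequality, and the RHS of \eqref{6} applied to $\mathbb{A}$ followed by order-reversing inversion. The only cosmetic difference is that you cite \eqref{inverse_needed} directly for the middle step, where the paper cites the equivalent form \eqref{needed_1}.
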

\begin{proof}
	Notice that
	\begin{align*}
	\sigma_g(w;\mathbb{A}^{-1})&\leq \sum_{i=1}^{n}w_iA_i^{-1}\quad({\text{by}}\;\eqref{6})\\
	&\leq K(h,2)\left(\sum_{i=1}^{n}w_iA_i\right)^{-1}\quad({\text{by}}\;\eqref{needed_1})\\
	&\leq K(h,2)\;\sigma_g(w;\mathbb{A})^{-1}\quad({\text{by}}\;\eqref{6}).
	\end{align*}
\end{proof}

\textbf{Acknowledgment:} The authors would like to thank the referees for their useful comments.

\vskip 0.3 true cm

{\tiny {(M. Sababheh) Department of Basic Sciences, Princess Sumaya University for Technology, Amman,
Jordan.}}

{\tiny \textit{E-mail address:} sababheh@yahoo.com, sababheh@psut.edu.jo}

{\tiny \vskip 0.3 true cm }

{\tiny (H.R. Moradi) Young Researchers and Elite Club, Mashhad Branch, Islamic Azad
	University, Mashhad, Iran. }

{\tiny \textit{E-mail address:} hrmoradi@mshdiau.ac.ir}
	
{\tiny \vskip 0.3 true cm }

{\tiny (Z. Heydarbeygi) Department of Mathematics, Mashhad Branch, Islamic Azad
	University, Mashhad, Iran.}

{\tiny \textit{E-mail address:} zheydarbeygi@yahoo.com}
	
\end{document}